\newcommand{\C}{\mathbb{C}}
\newcommand{\ZZ}{\mathbb{Z}}
\newcommand{\QQ}{\mathbb{Q}}
\newcommand{\PP}{\mathbb{P}}
\newcommand{\OO}{\mathcal O}
\newcommand{\YY}{\mathcal Y}
\newcommand{\UU}{\mathcal U}
\newcommand{\WW}{\mathcal W}
\newcommand{\MM}{\mathcal M}
\newcommand{\codim}{\hbox{codim}}
\newcommand{\wt}{\widetilde}
\newcommand{\rom}{\romannumeral}
\DeclareMathOperator{\rank}{rank}
\DeclareMathOperator{\Gr}{Gr}
\newtheorem{theorem}{Theorem}[section]
\newtheorem{corollary}[theorem]{Corollary}
\newtheorem{proposition}[theorem]{Proposition}
\newtheorem{conjecture}[theorem]{Conjecture}
\newtheorem{convention}{Conventions}
\newtheorem{nonumbering}{Theorem}
\newtheorem{nonumberingc}{Corollary}
\theoremstyle{definition}
\newtheorem{remark}[theorem]{Remark}
\newtheorem{nonumberingt}{Acknowledgments}
\begin{document}

\author[Robert Laterveer]
{Robert Laterveer}

\address{Institut de Recherche Math\'ematique Avanc\'ee,
CNRS -- Universit\'e 
de Strasbourg,\
7 Rue Ren\'e Des\-car\-tes, 67084 Strasbourg CEDEX,
FRANCE.}
\email{robert.laterveer@math.unistra.fr}

\title{On the Chow groups of hypersurfaces in symplectic Grassmannians}

\begin{abstract} Let $Y$ be a Pl\"ucker hypersurface in a symplectic Grassmannian $I_1 \Gr(3,n)$ or a bisymplectic Grassmannian $I_2 \Gr(3,n)$. We show that many Chow groups of $Y$ inject into cohomology.
\end{abstract}

\keywords{Algebraic cycles, Chow groups, motive, Bloch-Beilinson conjectures}
\subjclass[2010]{Primary 14C15, 14C25, 14C30.}

\maketitle

\section{Introduction}

\noindent
Given a smooth projective variety $Y$ over $\C$, let $A_i(Y):=CH_i(Y)_{\QQ}$ denote the Chow groups of $Y$ (i.e. the groups of $i$-dimensional algebraic cycles on $Y$ with $\QQ$-coefficients, modulo rational equivalence). Let $A_i^{hom}(Y)\subset A_i(Y)$ denote the subgroup of homologically trivial cycles.

The famous Bloch--Beilinson conjectures \cite{Jan}, \cite{Vo} predict that the Hodge level of the cohomology of $Y$ should have an influence on the size of the Chow groups of $Y$. For surfaces, this is the notorious Bloch conjecture, which is still an open problem. For hypersurfaces in projective space, the precise prediction is as follows:

\begin{conjecture}\label{conj0} Let $Y\subset\PP^{n}$ be a smooth hypersurface of degree $d$. Then
  \[ A_i^{hom}(Y)=0\ \ \ \forall\ i\le {n\over d} -1\ .\]
\end{conjecture}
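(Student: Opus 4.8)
Since Conjecture~\ref{conj0} is a special case of the Bloch--Beilinson conjectures, a complete proof is out of reach; the outline below is really an account of what the statement amounts to and of how the known partial cases are obtained. The first step is to pin down the Hodge-theoretic content of the bound $i\le n/d-1$. By the Lefschetz hyperplane theorem $H^k(Y)\cong H^k(\PP^n)$ for $k\neq n-1$, so the only non-trivial cohomology of $Y$ is the primitive part $H^{n-1}_{\mathrm{prim}}(Y)$; and by Griffiths' residue calculus, $H^{n-1-q,q}_{\mathrm{prim}}(Y)$ is identified with the degree $(q+1)d-n-1$ graded piece of the Jacobian ring $R_F=\C[x_0,\dots,x_n]/J_F$ of a defining equation $F$. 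As $R_F$ is concentrated in non-negative degrees, $h^{n-1-q,q}_{\mathrm{prim}}(Y)=0$ whenever $(q+1)d<n+1$, i.e.\ for $q\le n/d-1$. Thus the range in the conjecture is exactly the range in which $H^{n-1}_{\mathrm{prim}}(Y)$ has maximal Hodge coniveau, and Conjecture~\ref{conj0} is the instance of the generalised Bloch conjecture predicting that this cohomological triviality is mirrored at the level of Chow groups.

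The second step is to translate the desired conclusion into a statement about the diagonal, via the Bloch--Srinivas decomposition and its higher-codimension refinements (Paranjape, and further work). For $i=0$ it suffices to find an integer $N$ and a decomposition
\[ N\,\Delta_Y \;=\; \Gamma_1+\Gamma_2 \quad\text{in}\quad A_{n-1}(Y\times Y), \]
with $\Gamma_1$ supported on $D\times Y$ for some closed $D\subsetneq Y$ and $\Gamma_2$ supported on $Y\times\{\mathrm{pt}\}$; for general $i$ one wants the analogue in which the small subvariety in the second factor is allowed to have dimension up to $i$. Letting such a decomposition act on $A_i^{hom}(Y)$, and arguing by induction on $i$ together with the usual localisation and spreading arguments, yields the asserted vanishing. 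The implication ``$H^{n-1}_{\mathrm{prim}}(Y)$ has high coniveau'' $\Longrightarrow$ ``$\Delta_Y$ admits such a decomposition'' is precisely a substantial portion of the Bloch--Beilinson conjectures: \emph{this is the main obstacle}, and it is not expected to yield to formal arguments.

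What one can do unconditionally is to produce the required diagonal decomposition by geometry when $d$ is small. For $i=0$ the hypothesis $d\le n$ makes $Y$ Fano, hence rationally connected, whence $A_0(Y)=\QQ$. For $i\ge 1$ one exploits that $Y$ is swept out by linear subspaces: the Fano scheme of linear $\PP^{i+1}$'s contained in $Y$ has expected dimension $(i+2)(n-i-1)-\binom{i+1+d}{d}$, which for $d$ small is large enough that the incidence variety of pairs (a point, a linear subspace of $Y$ through it) dominates $Y$; transporting cycles along this incidence correspondence and using that $\PP^{i+1}$ has trivial Chow groups produces the partial decomposition of $\Delta_Y$ needed to conclude $A_i^{hom}(Y)=0$ in that range. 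This is the circle of ideas of Roitman, Esnault--Levine--Viehweg, Paranjape and Otwinowska; it settles Conjecture~\ref{conj0} for $d$ small relative to $n$ (and, for $i\le 1$, in a somewhat wider range), but it weakens as $d$ grows. It is this same two-step pattern --- a Hodge-theoretic input identifying the relevant coniveau, plus a diagonal decomposition coming from a geometric sweeping, here by suitable sub-Grassmannians in place of linear subspaces --- that we shall carry out for Pl\"ucker hypersurfaces in $I_1\Gr(3,n)$ and $I_2\Gr(3,n)$.
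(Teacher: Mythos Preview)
The statement is a \emph{conjecture}, and the paper does not prove it; immediately after stating it the paper says ``Conjecture~\ref{conj0} is still open; partial results have been obtained in \ldots''. You correctly recognise this and present not a proof but an exposition of the Hodge-theoretic meaning of the bound and of the Bloch--Srinivas/Paranjape mechanism behind the known partial results. As such there is nothing to compare on the level of proof: neither you nor the paper proves the conjecture.

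One inaccuracy worth flagging is your closing sentence. You announce that the paper will carry out ``this same two-step pattern \ldots\ here by suitable sub-Grassmannians in place of linear subspaces''. That is not what the paper does. For hyperplane sections of $I_1\Gr(3,n)$ and $I_2\Gr(3,n)$ the paper does not sweep $Y$ by sub-Grassmannians. Instead it uses the \emph{projections} of Bernardara--Fatighenti--Manivel (Theorem~\ref{proj}) together with the Chow-theoretic Cayley trick (Theorem~\ref{ji}) to obtain a motivic relation (Proposition~\ref{mot}) injecting $A_i^{hom}(Y)$ into $A_{i+3}^{hom}(Y')$ for a hyperplane section $Y'$ of an ordinary Grassmannian $\Gr(3,n+1)$, and then invokes the author's earlier result (Theorem~\ref{3n}) for that case. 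The diagonal decomposition and spread argument appear only at the end, to pass from generic to all smooth hyperplane sections. So the geometric input is a reduction to a previously treated case via blow-ups and projective bundles, not a cylinder/incidence construction with subvarieties sweeping $Y$.
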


Conjecture \ref{conj0} is still open; partial results have been obtained in \cite{Lew}, \cite{V96}, \cite{Ot}, \cite{ELV}, \cite{HI}.

%
    
 In \cite{hyper}, I considered a version of Conjecture \ref{conj0} for Pl\"ucker hyperplane sections of Grassmannians. 
 In this note, we look at the case of Pl\"ucker hyperplane sections of {\em symplectic Grassmannians\/}. Recall that inside the Grassmannian $\Gr(3,n)$ (of $3$-dimensional subspaces of an $n$-dimensional vector space), the symplectic Grassmannian $I_1 \Gr(3,n)\subset\Gr(3,n)$  parametrizes subspaces that are isotropic with respect to some fixed skew-symmetric 2-form.
  The precise prediction (cf. subsection \ref{ss:mot} below) is as follows:
  
 \begin{conjecture}\label{conj} Let
  \[ Y:= I_1\Gr(3,n)\cap H \ \ \ \subset\ \PP^{{n\choose 3}-1} \]
  be a smooth hyperplane section (with respect to the Pl\"ucker embedding). Then
    \[ A_i^{hom}(Y)=0\ \ \ \forall\  i\le n-4\ .\]
    \end{conjecture}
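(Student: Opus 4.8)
The plan is to reduce Conjecture~\ref{conj} to a Hodge-theoretic coniveau bound for the vanishing cohomology of $Y$, and then to make the Bloch--Beilinson prediction effective for this particular family. Write $X:=I_1\Gr(3,n)$, $m:=\dim Y=3n-13$, and let $\OO_X(1)$ be the Pl\"ucker polarization. Two structural facts are at the root of the argument: $X$ admits an affine cell decomposition --- it is homogeneous for $n$ even and quasi-homogeneous (an odd symplectic Grassmannian) for $n$ odd --- so that $A_*^{hom}(X)=0$, the cycle class map $A_*(X)\to H^{2*}(X,\QQ)$ is an isomorphism onto a purely Tate cohomology ring with $H^{\mathrm{odd}}(X)=0$, and $h(X)$ is a finite sum of Lefschetz motives; and $X$ is cut out in $\Gr(3,n)$ by a section of $\Lambda^2\Ss^\vee$, so that by adjunction $-K_X=\OO_X(n-2)$ and hence $-K_Y=\OO_Y(n-3)$.

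Next I would perform the Lefschetz reduction. By the weak Lefschetz theorem $H^k(Y)\cong H^k(X)$ for $k<m$, and by hard Lefschetz and Poincar\'e duality the same Tate groups control $H^k(Y)$ for $k>m$; the only new piece is the vanishing cohomology $H^m_{\mathrm{van}}(Y):=\ker\!\big(H^m(Y)\to H^{m+2}(X)\big)$. Since $X$ has trivial Chow groups, the refined Chow--K\"unneth machinery used for Grassmannians in \cite{hyper} (following Vial) yields a splitting $h(Y)\cong\big(\bigoplus_i\LL^{a_i}\big)\oplus M$ of Chow motives with $M$ realizing $H^m_{\mathrm{van}}(Y)$; consequently $A_i^{hom}(Y)=A_i(M)$ in the range $i\le n-4$, and it suffices to show $A_i(M)=0$ there.

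The Hodge-theoretic heart of the matter is the coniveau estimate. Consider the affine complement $U:=X\setminus Y$, of dimension $m+1$, and recall the Gysin--residue sequence together with the Griffiths--Deligne pole-order description of the Hodge filtration on $H^{m+1}(U)$: a meromorphic $(m+1)$-form with pole of order $\le k$ along $Y$ is a section of $\Omega^{m+1}_X(kY)=K_X\otimes\OO_X(k)=\OO_X\big(k-(n-2)\big)$, which has no sections for $k\le n-3$. Transporting this through the residue onto $H^m_{\mathrm{van}}(Y)(-1)$ --- a quotient of $H^{m+1}(U)$ modulo the Tate class coming from $H^{m+1}(X)$, which is irrelevant in this range --- gives $F^{\,m-n+4}H^m_{\mathrm{van}}(Y)=0$; equivalently $H^m_{\mathrm{van}}(Y)$ has Hodge coniveau $\ge n-3$, i.e. $H^{p,q}_{\mathrm{van}}(Y)=0$ whenever $\min(p,q)\le n-4$. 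By the Bloch--Beilinson philosophy this is precisely the predicted source of the bound $A_i^{hom}(Y)=0$ for $i\le n-4$.

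The step I expect to be the main obstacle is the passage from this \emph{Hodge} coniveau to the required \emph{geometric} coniveau --- i.e. exhibiting $M$ as a direct summand of $h(Z)(c)$ with $\dim Z\le m-(n-3)$, whence $A_i(M)=0$ for $i\le n-4$ --- which in general is Grothendieck's generalized Hodge conjecture. Here one must exploit the rich geometry of $X=I_1\Gr(3,n)$: it is swept out by many explicit subvarieties --- linear Pl\"ucker-subspaces, Schubert loci, and sub-Grassmannians such as $I_1\Gr(3,n-2)\subset I_1\Gr(3,n)$ obtained from symplectic subspaces $\C^{n-2}\subset\C^n$ --- and the goal is to show that the low-coniveau part of $H^m_{\mathrm{van}}(Y)$ is carried by an algebraic correspondence factoring through a closed subvariety $Z\subset Y$ of codimension $\ge n-3$; a Bloch--Srinivas ``decomposition of the diagonal modulo $Z$'' argument (as in \cite{hyper} for Grassmannians and in \cite{Ot}, \cite{ELV} for projective hypersurfaces) then produces the vanishing. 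Effectivity at this last step is what may force one to settle for a somewhat smaller range of $i$ --- hence ``many'' Chow groups rather than literally all $i\le n-4$ --- and the bisymplectic case $I_2\Gr(3,n)$ is handled along the same lines once the triviality of its Chow groups (following Benedetti) is in place.
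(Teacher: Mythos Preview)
The statement is a \emph{conjecture}, and the paper does not claim a proof of it; what the paper establishes is the partial result Theorem~\ref{main} (vanishing for $i\le n-5$, and up to $i\le n-4$ only for $n\le 10$ or $n=12$). So your proposal should be compared both to the paper's \emph{motivation} of the conjecture (subsection~\ref{ss:mot}) and to its \emph{partial verification}.

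On the motivational side your Hodge-coniveau computation via Griffiths residues is plausible and leads to the same prediction, but the paper reaches Theorem~\ref{coho} by a different route: it uses the Bernardara--Fatighenti--Manivel \emph{projection} relating $Y=I_1\Gr(3,n)\cap H$ to a smooth hyperplane section $Y'=\Gr(3,n+1)\cap H$, reads off an isomorphism of Hodge structures \eqref{hs}, and imports the Hodge numbers of $Y'$ from \cite{BFM}. Your pole-order argument would in addition require knowing that the pole filtration equals the Hodge filtration for hypersurfaces in $I_1\Gr(3,n)$, which you have not justified.

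The genuine gap is in the effective part. The paper's partial proof does \emph{not} attempt to promote Hodge coniveau to geometric coniveau directly, nor does it sweep $Y$ by Schubert loci or sub-symplectic-Grassmannians as you suggest. Instead, the same BFM projection is upgraded to a relation of \emph{Chow motives} via Cayley's trick (Theorem~\ref{ji}, Proposition~\ref{mot}): the blow-up $\wt{Y}'$ of $Y'$ along $Z'\cong I_1\Gr(2,n)$ is identified with a $\PP(E)$ over $I_0\Gr(3,n)=\Gr(3,n)$ in which $Y$ sits as the zero locus of a section of $E$. This yields an injection
\[
A_i^{hom}(Y)\ \hookrightarrow\ A_{i+3}^{hom}(Y')\ \oplus\ \bigoplus A_\ast^{hom}(I_1\Gr(2,n)),
\]
where the second summand vanishes (Proposition~\ref{2n}) and the first is controlled by the earlier result \cite{hyper} on Pl\"ucker hyperplanes in ordinary Grassmannians (Theorem~\ref{3n}); a spread argument then covers all smooth $Y$. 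None of this mechanism appears in your proposal. Your outline correctly names the obstacle (Hodge coniveau $\Rightarrow$ geometric coniveau is the generalized Hodge conjecture), but without the projection/Cayley reduction to $\Gr(3,n+1)\cap H$ it gives no effective bound at all, whereas the paper's approach converts the problem into one already solved in \cite{hyper} and thereby obtains the range $i\le n-5$ unconditionally.
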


    The main result of this note is a partial verification of Conjecture \ref{conj}:
    
    \begin{nonumbering}[=Theorem \ref{main}] Let  
      \[ Y:= I_1\Gr(3,n)\cap H \ \ \ \subset\ \PP^{{n\choose 3}-1} \]
  be a smooth hyperplane section (with respect to the Pl\"ucker embedding).   
  Then
      \[ A_i^{hom}(Y)=0\ \ \ \forall\ i\le n-5\ .\]
      Moreover, in case $n\le 10$ or $n=12$ we have
      \[  A_{i}^{hom}(Y)=0\ \ \ \forall\ i\le n-4\ .\]
        \end{nonumbering}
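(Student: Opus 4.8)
The plan is to reduce the statement, by a spreading‑out argument for the universal hyperplane section, to the triviality of the Chow groups of two auxiliary varieties, along the lines of \cite{hyper}. Set $X:=I_1\Gr(3,n)$, so that $\dim X=3(n-4)$ and $d:=\dim Y=3(n-4)-1$. First I would record that $X$ admits a cellular decomposition into affine spaces --- the Schubert decomposition when $n$ is even, and its analogue for the odd symplectic (only quasi‑homogeneous) Grassmannian when $n$ is odd. Consequently the cycle class map $CH_*(X)_\QQ\to H_*(X,\QQ)$ is an isomorphism, the Chow motive $h(X)$ is a finite direct sum of powers of $\LLL$, and $A_i^{hom}(X)=0$ for all $i$. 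Let $\check\PP:=\PP\big(H^0(X,\OO_X(1))^\vee\big)$, of dimension $N:=\binom n3-1$, let $\YY\subset X\times\check\PP$ be the incidence variety $\{(x,[H]):x\in H\}$, and let $p\colon\YY\to X$ and $\pi\colon\YY\to\check\PP$ be the two projections. Since $p$ is a Zariski‑locally trivial $\PP^{N-1}$‑bundle --- namely $\PP$ of the kernel of the evaluation $H^0(X,\OO_X(1))\otimes\OO_X\to\OO_X(1)$ --- the projective bundle formula shows that $h(\YY)$ is again a finite direct sum of powers of $\LLL$, so that $CH_*(\YY)_\QQ\to H_*(\YY,\QQ)$ is an isomorphism too. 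By the Lefschetz hyperplane theorem, the cohomology of a smooth fibre $Y=\pi^{-1}([H])$ coincides with that of $X$ outside degree $d$, the ``new'' part being the vanishing cohomology $H^d_{\mathrm{van}}(Y)$; hence, using the triviality of $CH_*(X)_\QQ$ to split off the remaining Tate summands, the problem reduces to controlling the Chow groups of the vanishing submotive $\mathfrak h^d_{\mathrm{van}}(Y)$ of $h(Y)$.

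The second ingredient is Hodge‑theoretic. Using the Griffiths‑type residue description of the cohomology of a hyperplane section of a (quasi‑)homogeneous Fano variety --- equivalently, Bott/Snow‑type vanishing for twisted holomorphic forms on $X$ and on $Y$ --- I would compute which Hodge numbers $h^{p,d-p}_{\mathrm{van}}(Y)$ can be nonzero; the Fano index of $X$ with respect to $\OO_X(1)$ governs the answer. The goal is a coniveau estimate for $H^d_{\mathrm{van}}(Y)$ strong enough to feed into the spreading step, together with a partial geometric realization of it; the improved bound in the cases $n\le 10$ or $n=12$ is precisely where the full geometric coniveau becomes accessible and the motive $\mathfrak h^d_{\mathrm{van}}(Y)$ is small enough (of abelian or $K3$ type) to be analyzed directly.

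The last step turns this cohomological input into a statement about Chow groups. Given $i$ in the asserted range and $z\in A_i^{hom}(Y)$, one spreads $z$ out to a cycle on $\YY$ over a dense Zariski‑open of $\check\PP$; the coniveau estimate forces this spread cycle, after correction by cycles pulled back from $\check\PP$ and cycles supported over the discriminant, to be homologically trivial on a smooth projective model of $\YY$, hence rationally trivial there by the triviality of $CH_*(\YY)_\QQ$; restricting to the fibre gives $z=0$ in $A_i(Y)$. In the range $n\le 10$ or $n=12$ the same argument, refined by Kimura finite‑dimensionality of the motives that then occur, yields $i\le n-4$; and a standard Noetherian‑induction argument --- once more relying on the triviality of the Chow groups of $X$ and of the strata that appear --- upgrades the conclusion from the general smooth member to every smooth member. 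The main obstacle is exactly this final passage: one must realize the Hodge coniveau geometrically, circumventing the generalized Hodge conjecture by an explicit construction, and then make a Voisin‑type argument for general complete intersections work in this not entirely classical ambient; the loss of one unit in the general bound ($n-5$ rather than the conjectural $n-4$) is precisely the shortfall between what this argument yields unconditionally and the full generalized Bloch conjecture.
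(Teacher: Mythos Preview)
Your approach is genuinely different from the paper's, and the gap you yourself flag in the final paragraph is fatal: you never actually produce the ``explicit construction'' that would realize the Hodge coniveau geometrically, and without it the spreading step does not go through. Knowing that $H^d_{\mathrm{van}}(Y)$ has Hodge coniveau $\ge c$ does \emph{not} force a spread of $z\in A_i^{hom}(Y)$ to become homologically trivial on $\YY$; the cohomology class of the spread cycle on the total space is governed by the topology of the family, not by the Hodge level of the fibre. To make a Voisin-type argument work one needs either the generalized Hodge conjecture for $Y$ (open) or a concrete family of codimension-$c$ subvarieties on the fibres supporting the vanishing cohomology, with control over the parameter space --- and you supply neither. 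The appeal to Kimura finite-dimensionality for the improved bound is likewise unjustified: finite-dimensionality of $h(Y)$ is not known a priori here; in the paper it is a \emph{consequence} of the Chow-group vanishing (Corollary~\ref{cor2}(\rom1)), not an input to it.

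The paper bypasses all of this by an explicit motivic reduction to the ordinary-Grassmannian case. The key input is the \emph{projection} construction of Bernardara--Fatighenti--Manivel (Theorem~\ref{proj}): a generic $Y\subset I_1\Gr(3,n)$ sits, after a single blow-up along $Z'\cong I_1\Gr(2,n)$, in a Cayley-trick configuration (Theorem~\ref{ji}) over $\Gr(3,n)$, with the associated zero locus being a hyperplane section $Y'\subset \Gr(3,n+1)$. This yields an isomorphism of Chow motives (Proposition~\ref{mot}) and hence an injection
\[
A_i^{hom}(Y)\ \hookrightarrow\ A_{i+3}^{hom}(Y')\ \oplus\ \bigoplus A_\ast^{hom}\bigl(I_1\Gr(2,n)\bigr)\ .
\]
The second summand vanishes by Proposition~\ref{2n}; the first is handled by Theorem~\ref{3n} for $\Gr(3,n+1)\cap H$, giving $i+3\le (n+1)-3$, i.e.\ $i\le n-5$, with the extra unit exactly when $n+1\le 11$ or $n+1=13$. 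Only then does a spread argument enter, and merely to pass from the generic smooth section to every smooth section. So the substance of the proof is the projection/Cayley-trick correspondence, not a coniveau computation on $Y$ itself.
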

    
To prove Theorem \ref{main}, we rely on the recent notion of {\em projections\/} among (symplectic) Grassmannians \cite{BFM}. Combined with the Chow-theoretic Cayley trick \cite{Ji}, this reduces Theorem \ref{main} to understanding the Chow groups of a hyperplane section in an ordinary Grassmannian $\Gr(3,n+1)$. This last problem was handled in \cite{hyper}.

As a consequence of Theorem \ref{main}, some instances of the generalized Hodge conjecture are verified:

\begin{nonumberingc}[=Corollary \ref{ghc}] Let $Y$ be as in Theorem \ref{main}, and assume $n\le 10$ or $n=12$. Then $H^{\dim Y}(Y,\QQ)$ is supported on a subvariety of codimension $n-3$.
\end{nonumberingc}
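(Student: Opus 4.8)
The plan is to combine Theorem \ref{main} with the Bloch--Srinivas method that turns a vanishing range for Chow groups into a coniveau statement for cohomology, exactly as was done for hyperplane sections of ordinary Grassmannians in \cite{hyper}. Put $M:=I_1\Gr(3,n)$ and $c:=n-3$; by Theorem \ref{main}, the assumption $n\le 10$ or $n=12$ gives $A_i^{hom}(Y)=0$ for all $i\le c-1$. The geometric input I would isolate first is that $M$ is a rational homogeneous variety, hence admits an algebraic cellular decomposition, so that $H^\ast(M,\QQ)$ is of Tate type and spanned by algebraic cycle classes; by the Lefschetz hyperplane theorem (together with Hard Lefschetz on $Y$) the group $H^j(Y,\QQ)$ is then spanned by algebraic classes for every $j\ne\dim Y$. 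Hence the only possibly interesting piece is $H^{\dim Y}(Y,\QQ)$, which decomposes as an orthogonal direct sum of Hodge structures $H^{\dim Y}(Y,\QQ)=H^{\dim Y}_{\mathrm{alg}}(Y)\oplus H^{\dim Y}_{\mathrm{tr}}(Y)$, the first summand spanned by algebraic cycle classes and the second its orthogonal complement.

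The summand $H^{\dim Y}_{\mathrm{alg}}(Y)$ is harmless: when $\dim Y=3n-13$ is even it is spanned by classes of algebraic cycles of codimension $\tfrac12\dim Y\ge n-3$ (the inequality holding in the range considered), and when $\dim Y$ is odd it vanishes; either way it is supported in codimension $\ge n-3$. For $H^{\dim Y}_{\mathrm{tr}}(Y)$ I would proceed as follows. Since $H^j(Y,\QQ)$ is spanned by algebraic classes for $j\ne\dim Y$ and the cup product is nondegenerate there, every K\"unneth projector of $\Delta_Y$ other than the middle one is algebraic, hence so is the middle one; splitting off $H^{\dim Y}_{\mathrm{alg}}(Y)$ by means of a dual pair of bases of algebraic classes then produces an algebraic projector $p\in A_{\dim Y}(Y\times Y)_\QQ$ with $p_\ast H^j(Y,\QQ)=0$ for $j\ne\dim Y$ and $p_\ast H^{\dim Y}(Y,\QQ)=H^{\dim Y}_{\mathrm{tr}}(Y)$. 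As $p$ annihilates all algebraic cohomology classes, $p_\ast A_i(Y)\subseteq A_i^{hom}(Y)$ for every $i$, so the hypothesis gives $p_\ast A_i(Y)=0$ for $i\le c-1$; feeding this into the (two-sided) Bloch--Srinivas argument applied to the idempotent correspondence $p$, and using the numerical inequality $\dim Y=3n-13\ge 2(n-3)-1$ valid in the range considered, one obtains a decomposition of $p$ each of whose terms acts on $H^{\dim Y}(Y,\QQ)$ with image supported on a subvariety of codimension $\ge c$. Therefore $H^{\dim Y}_{\mathrm{tr}}(Y)=p_\ast H^{\dim Y}(Y,\QQ)$ is supported in codimension $\ge n-3$; together with the algebraic summand this gives the statement.

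The real work, and the only place I expect any difficulty, is the bookkeeping in the second step: checking that every term of the decomposition of $p$ lands in codimension $\ge c$ on the weight-$\dim Y$ cohomology. This reduces to two points --- that a correspondence supported ``on the source side'' over a subvariety of dimension $\le c-1$ acts as zero in degree $\dim Y$ (this is exactly where $\dim Y\ge 2(n-3)-1$ is used), and that a correspondence supported ``on the target side'' in codimension $\ge c$ automatically has image of coniveau $\ge c$ --- the term supported on a product of two subvarieties of codimension $\ge c$ being then immediate. Since this is entirely parallel to the treatment in \cite{hyper}, I would keep it brief and refer there for the routine verifications.
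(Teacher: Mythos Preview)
Your argument is correct, but it takes a more elaborate route than the paper's. The paper applies the iterated Bloch--Srinivas decomposition directly to the diagonal: the vanishing $A_i^{hom}(Y)=0$ for $i\le n-4$ is equivalent (via \cite[Theorem 1.7]{moi}) to a decomposition $\Delta_Y=\gamma+\delta$ with $\gamma$ \emph{completely decomposed}, i.e.\ $\gamma\in A^\ast(Y)\otimes A^\ast(Y)$, and $\delta$ supported on $Y\times W$ with $\codim\,W=n-3$. A completely decomposed correspondence annihilates $H^{\dim Y}_{\mathrm{tr}}(Y,\QQ)$ automatically (its action pairs the input against algebraic classes), so no numerical inequality and no auxiliary projector are needed; the action of $\delta$ then lands in cohomology supported on $W$, and the algebraic summand is handled by the trivial remark that it sits in codimension $\tfrac12\dim Y\ge n-3$.

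By contrast, you first invoke the cellular structure of the ambient variety and Lefschetz to manufacture an algebraic transcendental projector $p$, then run a two-sided decomposition of $p$ and invoke the inequality $\dim Y\ge 2(n-3)-1$ to kill the piece supported over a small source. This works, but every one of these steps is extra bookkeeping that the ``completely decomposed'' form of Bloch--Srinivas makes unnecessary. (One small inaccuracy: $I_1\Gr(3,n)$ is rational homogeneous only for $n$ even; for $n$ odd it is an odd symplectic Grassmannian, which is not homogeneous but still admits a cellular decomposition, so your conclusion that $H^\ast(M,\QQ)$ is algebraic remains valid.) In short: same engine, but the paper's version is shorter because it never leaves the diagonal.
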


Other consequences are as follows:

\begin{nonumberingc}[=Corollary \ref{cor2}] Let  
\[ Y:=  I_1 \Gr(3,n)\cap H \ \ \ \subset\ \PP^{{n\choose 3}-1}\]
  be a smooth hyperplane section (with respect to the Pl\"ucker embedding). 
  
 \noindent
 (\rom1) If $n\le 8$, then $Y$ has finite-dimensional motive (in the sense of \cite{Kim}).
 
 \noindent
 (\rom2) If $n\le 9$, then $Y$ has trivial Griffiths groups (and so Voevodsky's smash conjecture is true for $Y$).
 
 \noindent
 (\rom3) If $n\le 10$, the Hodge conjecture is true for $Y$.
  \end{nonumberingc}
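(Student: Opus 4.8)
The plan is to derive all three conclusions formally from Theorem \ref{main}, by transporting the corresponding properties from the ordinary Grassmannian hyperplane section that governs $Y$. Write $X:=\Gr(3,n+1)\cap H$ for the hyperplane section occurring in the reduction that proves Theorem \ref{main}. The key input I would extract from that proof is not merely an identification of Chow groups but a relation of Chow \emph{motives}: both the projection of \cite{BFM} and Jiang's Cayley trick \cite{Ji} are operations on motives, so the reduction should upgrade to a split injection exhibiting the transcendental part of $h(Y)$ as a direct summand of $h(X)$ up to Lefschetz twists. Concretely, I expect a decomposition in which $h(Y)$ is a direct summand of $\bigoplus_j h(X)(-j)\oplus\bigoplus_k \LLL^{\otimes k}$, the Lefschetz summands accounting for the contribution of the ambient Grassmannians. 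The first thing I would do is make this motivic relation precise, checking that the twists are as claimed; since $I_1\Gr(3,n)$ is rational homogeneous, hence cellular with Chow motive a finite sum of Tate motives, every piece of $h(Y)$ not seen by $h(X)$ is automatically of Lefschetz type.

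Granting that $h(Y)$ is a direct summand of $\bigoplus_j h(X)(-j)$ together with Tate motives, the three statements become formal, because each property passes to direct summands and holds trivially for Tate motives. For (\rom1), Kimura finite-dimensionality is stable under direct sums, tensor products, Tate twists and direct summands, and every $\LLL^{\otimes k}$ is finite-dimensional; it therefore suffices that $h(X)$ be finite-dimensional, which is the content of the corresponding result of \cite{hyper} for $X=\Gr(3,n+1)\cap H$, available precisely in the range that translates to $n\le 8$. For (\rom2), the Griffiths group is additive over direct sums of motives and vanishes on Tate motives, so $\Grif(Y)_{\QQ}=0$ descends from the vanishing $\Grif(X)_{\QQ}=0$ of \cite{hyper}, in the range giving $n\le 9$; Voevodsky's smash conjecture for $Y$ then follows at once, since $\Grif(Y)_{\QQ}=0$ means every homologically trivial cycle on $Y$ is algebraically trivial, and algebraically trivial cycles are smash-nilpotent (a theorem of Voevodsky and Voisin). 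For (\rom3), the Hodge conjecture is again a property of motives stable under summands and trivially valid for Tate motives, so it transfers from $X$, where \cite{hyper} establishes it in the range yielding $n\le 10$.

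The numerology is the decisive check and the main place where I would confirm the argument: the three bounds $n\le 8,9,10$ should be exactly the bounds of \cite{hyper} for $\Gr(3,m)\cap H$ read off under the substitution $m=n+1$. Matching these bounds term by term is what pins down the claim, and it is strong internal evidence that the motivic relation above is the correct mechanism. The main obstacle is thus entirely in the first paragraph --- promoting the Chow-group reduction of Theorem \ref{main} to a genuine split injection of Chow motives and identifying its complement as a sum of Lefschetz motives; once that is done, (\rom1)--(\rom3) require no further geometry, only the stability of the three properties under direct summands and their triviality for Tate motives.
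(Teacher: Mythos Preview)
Your approach is coherent but differs from the paper's, and it carries a gap you do not address.

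The paper does \emph{not} transfer the three properties from $X=\Gr(3,n+1)\cap H$ via a motivic summand relation. It argues intrinsically from the \emph{conclusion} of Theorem~\ref{main} (not from its proof): the vanishing $A_i^{hom}(Y)=0$ for $i\le n-4$ is equivalent, via Bloch--Srinivas, to a decomposition $\Delta_Y=\gamma+\delta$ with $\gamma$ completely decomposed and $\delta$ supported on $Y\times W$ with $\codim W=n-3$. Since $\dim Y=3n-13$, this says (in the language of \cite{moi}) that $\hbox{Niveau}(A_\ast(Y))\le n-7$, i.e.\ the motive of $Y$ factors over a variety of dimension $n-7$. For $n\le 8,9,10$ this is respectively a curve, a surface, a threefold; hence (\rom1) $A^\ast_{AJ}(Y)=0$ and Kimura finite-dimensionality via \cite{43}, (\rom2) trivial Griffiths groups, (\rom3) the Hodge conjecture. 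No corollary of \cite{hyper} about $X$ is invoked.

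Your route---realising $h(Y)$ as a summand of $h(X)(3)\oplus(\hbox{Tate})$ through Proposition~\ref{mot} and then importing the analogous corollaries from \cite{hyper}---does give the right numerology under $m=n+1$, but only for \emph{generic} $Y$. The projection of Theorem~\ref{proj}, and hence Proposition~\ref{mot}, applies only to those hyperplane sections arising from the construction; the extension to \emph{all} smooth $Y$ in Theorem~\ref{main} is done by spreading the diagonal decomposition over the family, not by spreading the motivic isomorphism. To cover non-generic $Y$ along your lines you would have to either spread the split injection of motives (not obvious) or fall back on the diagonal decomposition---which is precisely the paper's argument. So the paper's proof is both more self-contained (it does not rely on corollaries you are assuming exist in \cite{hyper}) and uniform over all smooth hyperplane sections, whereas your argument buys a cleaner ``formal transfer'' picture at the cost of this genericity issue and an external dependence.
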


Applying the same method, we can also say something about hypersurfaces in {\em bisymplectic Grassmannians\/}. (Recall that the bisymplectic Grassmannian $ I_2 \Gr(3,n)\subset\Gr(3,n)$ is the locus of 3-spaces that are isotropic with respect to two fixed generic skew-forms.)

   \begin{nonumbering}[=Theorem \ref{main2}] Assume $n$ is even, and let
      \[ Y:= I_2 \Gr(3,n)\cap H \ \ \ \subset\ \PP^{{n\choose 3}-1} \]
  be a smooth hyperplane section (with respect to the Pl\"ucker embedding).   
  Then
      \[ A_i^{hom}(Y)=0\ \ \ \forall\ i\le n-7\ .\]
      Moreover, in case $n\le 10$ we have
      \[  A_{i}^{hom}(Y)=0\ \ \ \forall\ i\le n-6\ .\]
        \end{nonumbering}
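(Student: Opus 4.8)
The plan is to follow exactly the strategy announced in the introduction for Theorem \ref{main}, adapting it to the bisymplectic setting. The starting point is the projection technique of \cite{BFM}: just as $I_1\Gr(3,n)$ sits inside $\Gr(3,n)$ and projects suitably to relate with $\Gr(3,n+1)$, there should be a projection relating the bisymplectic Grassmannian $I_2\Gr(3,n)$ to $I_1\Gr(3,n+1)$ (or to a symplectic Grassmannian in one more dimension). Concretely, imposing isotropy with respect to a second generic skew-form on $3$-spaces in an $n$-dimensional space is, via the linear-algebra dictionary of \cite{BFM}, equivalent to a single isotropy condition after enlarging the ambient vector space; this is where the hypothesis that $n$ is even enters, since nondegeneracy of skew-forms forces even dimensions. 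I would first make this geometric reduction precise: identify $I_2\Gr(3,n)$ (and its Pl\"ucker hyperplane section $Y$) with a linear section of a projective bundle, or a zero locus of a section of a suitable vector bundle, over $I_1\Gr(3,n+1)$.

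Next I would invoke the Chow-theoretic Cayley trick of \cite{Ji}. The Pl\"ucker hyperplane section $Y = I_2\Gr(3,n)\cap H$ is the zero locus of a section of a line bundle (the Pl\"ucker $\OO(1)$) on $I_2\Gr(3,n)$; combining this with the bundle description from the previous step presents $Y$ as (the relevant piece of) a hypersurface or complete intersection inside a Grassmannian bundle whose base is the ordinary Grassmannian $\Gr(3,n+1)$, or more precisely inside $I_1\Gr(3,n+1)\cap H'$ for a hyperplane $H'$. The Cayley trick gives an isomorphism of Chow motives (up to Tate twists and direct summands) between $Y$ and such a linear section, so that the vanishing $A_i^{hom}(Y)=0$ for small $i$ is transported to, and follows from, the analogous vanishing for the symplectic Grassmannian hyperplane section. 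That input is precisely Theorem \ref{main}: plugging $n+1$ into the bound $i\le (n+1)-5 = n-4$ of Theorem \ref{main}, and accounting for the codimension shift (each isotropy/hyperplane condition shifts the range by one), yields $A_i^{hom}(Y)=0$ for $i\le n-7$ in general, and the improved bound $i\le n-6$ when $n+1\le 10$ or $n+1 = 12$, i.e. when $n\le 9$ or $n=11$ — which under the standing hypothesis that $n$ is even reads $n\le 10$. I would double-check the exact size of the shift by tracking dimensions through the Cayley trick, as this is what distinguishes the "$-7$" and "$-6$" thresholds.

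I would organize the write-up as: (i) a lemma recording the projection/bundle description of $I_2\Gr(3,n)$ over a symplectic Grassmannian in dimension $n+1$, citing \cite{BFM}; (ii) an application of \cite{Ji} to obtain the motivic decomposition of $Y$ in terms of a linear section $Y'$ of $I_1\Gr(3,n+1)$; (iii) the transfer of the Bloch--Beilinson-type vanishing from $Y'$ (known by Theorem \ref{main}) to $Y$, with careful bookkeeping of the index shift; and (iv) noting that the numerical hypothesis $n\le 10$ in the second clause is exactly what is needed for $n+1$ to fall in the range $\{n'\le 10\}\cup\{n'=12\}$ of Theorem \ref{main}.

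The main obstacle I anticipate is step (i): verifying that the projection of \cite{BFM} genuinely identifies $I_2\Gr(3,n)$ with a clean geometric object (projective bundle, blow-up, or vector bundle zero locus) over a symplectic Grassmannian, rather than something only birational or only valid away from a bad locus. If the identification holds only generically, one must control the excess locus and check it contributes nothing to the low-degree Chow groups — typically by a dimension count showing the bad locus has codimension larger than the range of $i$ under consideration, or by a spreading-out/decomposition-of-the-diagonal argument. A secondary subtlety is ensuring the Pl\"ucker hyperplane $H$ on $I_2\Gr(3,n)$ pulls back to (or is compatible with) a Pl\"ucker-type hyperplane on the symplectic Grassmannian side, so that Theorem \ref{main} applies verbatim; if the induced divisor class is a positive multiple of $\OO(1)$ rather than $\OO(1)$ itself, one would need a mild extension of \cite{hyper} or an additional Cayley-trick step, which would account for any further loss in the numerical bound.
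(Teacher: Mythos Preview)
Your overall strategy coincides with the paper's: apply the projection of Theorem~\ref{proj} with $r=1$ (this is where ``$n$ even'' is used) to relate a generic $Y=I_2\Gr(3,n)\cap H$ to a smooth $Y'=I_1\Gr(3,n+1)\cap H$, invoke the Cayley-trick motivic relation (Proposition~\ref{mot}), feed in Theorem~\ref{main} for $Y'$, and then run the spread/decomposition-of-the-diagonal argument you mention to pass from generic to all smooth $Y$.

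There is one genuine omission. The motivic relation of Proposition~\ref{mot} does \emph{not} express $h(Y)$ purely as a summand of $h(Y')$: the blow-up $\wt{Y}'\to Y'$ has center $Z'\cong I_{2}\Gr(2,n)$, so what one actually gets is an injection
\[
A_i^{hom}(Y)\ \hookrightarrow\ A_{i+3}^{hom}(Y')\ \oplus\ \bigoplus A_\ast^{hom}\bigl(I_2\Gr(2,n)\bigr),
\]
and the second summand must be killed independently. Since $I_2\Gr(2,n)=\Gr(2,n)\cap H_1\cap H_2$, this is exactly the $s=2$ case of Proposition~\ref{2n}, which you never invoke. Your worry about the projection being ``only birational'' is in fact resolved by this blow-up picture, not by a dimension count on a bad locus.

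A smaller slip: your final arithmetic is off. The Cayley shift is precisely $+3$ (from the summand $h(Y)(-3)$), so Theorem~\ref{main} for $Y'\subset I_1\Gr(3,n+1)$ gives $A_{i+3}^{hom}(Y')=0$ for $i+3\le (n{+}1)-5$, i.e.\ $i\le n-7$, and the improved bound $i+3\le (n{+}1)-4$, i.e.\ $i\le n-6$, when $n{+}1\le 10$ or $n{+}1=12$. But ``$n\le 9$ or $n=11$'' intersected with ``$n$ even'' is $n\le 8$, not $n\le 10$ as you write; you should recheck how the $n=10$ case is obtained rather than asserting it follows formally.
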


The hyperplane sections $I_1 \Gr(3,9)\cap H$ and $I_2 \Gr(3,8)\cap H$ are of particular interest: they are Fano varieties of K3 type, and they are related to hyperplane sections $\Gr(3,10)\cap H$ and hence to Debarre--Voisin hyperk\"ahler fourfolds, cf. \cite[Section 4]{BFM}.

 \vskip0.8cm

\begin{convention} In this note, the word {\sl variety\/} will refer to a reduced irreducible scheme of finite type over $\C$. A {\sl subvariety\/} is a (possibly reducible) reduced subscheme which is equidimensional. 

{\bf All Chow groups will be with rational coefficients}: we denote by $A_j(Y):=CH_j(Y)_{\QQ} $ the Chow group of $j$-dimensional cycles on $Y$ with $\QQ$-coefficients; for $Y$ smooth of dimension $n$ the notations $A_j(Y)$ and $A^{n-j}(Y)$ are used interchangeably. 
The notations $A^j_{hom}(Y)$ and $A^j_{AJ}(X)$ will be used to indicate the subgroup of homologically trivial (resp. Abel--Jacobi trivial) cycles.

The contravariant category of Chow motives (i.e., pure motives with respect to rational equivalence as in \cite{Sc}, \cite{MNP}) will be denoted 
$\MM_{\rm rat}$.
\end{convention}

 \vskip1.0cm

\section{Preliminaries}

 \subsection{Cayley's trick and Chow groups}

\begin{theorem}[Jiang \cite{Ji}]\label{ji} Let $ E\to U$ be a vector bundle of rank $r\ge 2$ over a projective variety $U$, and let $S:=s^{-1}(0)\subset U$ be the zero locus of a regular section $s\in H^0(U,E)$ such that $S$ is smooth of dimension $\dim U-\rank E$. Let $X:=w^{-1}(0)\subset \PP(E)$ be the zero locus of the regular section $w\in H^0(\PP(E),\OO_{\PP(E)}(1))$ that corresponds to $s$ under the natural isomorphism $H^0(U,E)\cong H^0(\PP(E),\OO_{\PP(E)}(1))$. 
There is an isomorphism of integral Chow motives
  \[  h(X)\cong h(S)(1-r)\oplus \bigoplus_{i=0}^{r-2} h(U)(-i)\ \ \ \hbox{in}\ \MM^{\ZZ}_{\rm rat}\ .\]
    \end{theorem}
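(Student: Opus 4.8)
The plan is to prove Jiang's Cayley trick isomorphism by a direct geometric argument comparing $X \subset \PP(E)$ with the projective bundle $\PP(E) \to U$, exploiting the fact that $X$ is a hyperplane-bundle type divisor. First I would recall the projective bundle formula: for the projection $\pi\colon \PP(E)\to U$ one has the decomposition $h(\PP(E)) \cong \bigoplus_{i=0}^{r-1} h(U)(-i)$ in $\MM^{\ZZ}_{\rm rat}$, realised by the relative hyperplane class $\xi = c_1(\OO_{\PP(E)}(1))$ together with $\pi$ and $\pi^\ast$. The section $w \in H^0(\PP(E),\OO_{\PP(E)}(1))$ cuts out $X$, so $[X] = \xi$ in $A^1(\PP(E))$, and one gets a localization/excess sequence relating $h(X)$, $h(\PP(E))$ and the complement $\PP(E)\setminus X$. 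The key point is that $\PP(E)\setminus X$ is an affine-bundle-like object over the zero locus $S = s^{-1}(0)$: away from $X$, the section $s$ is nowhere zero as a section of a line subbundle, so one can identify $\PP(E)\setminus X$ with the total space of a vector bundle (or at least an affine bundle) over $S$ of rank $r-1$, whence $h_c(\PP(E)\setminus X) \cong h(S)(1-r)$ by homotopy invariance.

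The second step would be to splice these together. The Gysin/localization triangle for the closed embedding $X \hookrightarrow \PP(E)$ with open complement $j\colon \PP(E)\setminus X \hookrightarrow \PP(E)$ gives, on the level of Chow motives (using that everything is smooth projective, so one can work with the standard motivic decomposition rather than triangulated categories), a relation $h(\PP(E)) \cong h(X)(-1) \oplus \big(h(S)(1-r) \text{ shifted appropriately}\big)$ — more precisely one matches Chow groups in each degree. Combining with the projective bundle formula $h(\PP(E)) \cong \bigoplus_{i=0}^{r-1} h(U)(-i)$ and peeling off one copy of $h(U)$ (the one corresponding to the top power $\xi^{r-1}$, which is "absorbed" by $X$ via the excess intersection), one is left with $h(X)(-1) \oplus h(S)(1-r)$ on one side and $\bigoplus_{i=0}^{r-1}h(U)(-i)$ minus one copy on the other; after a Tate twist by $(1)$ this rearranges to exactly $h(X) \cong h(S)(1-r) \oplus \bigoplus_{i=0}^{r-2} h(U)(-i)$. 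To make this rigorous at the level of motives (not just Chow groups) I would exhibit explicit mutually inverse correspondences: the restriction map $h(X) \to h(U)$ induced by $\pi|_X$, the Gysin pushforward along $S \hookrightarrow X$ (noting $S \subset X$ via the zero section composed with $s$), and projectors built from powers of $\xi|_X$.

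Alternatively, and perhaps more cleanly, I would follow the argument via the blow-up: $\PP(E)$ with the section $w$ is the Cayley cone construction, and $X$ is the universal hyperplane section family; there is a classical diagram in which $X$ is the blow-up of $\PP^{r-1} \times U$... no — rather, $X \to U$ is itself a projective bundle away from $S$ (the fibre over $u \notin S$ is a $\PP^{r-2}$, the hyperplane $\{s(u) = 0\}$ in the fibre $\PP^{r-1}$), degenerating over $S$ to the whole $\PP^{r-1}$. So $X \to U$ is a $\PP^{r-2}$-bundle blown up, or cleanly: $X$ is the blow-up of a $\PP^{r-1}$-bundle along $S$... I would instead directly use that $X$ admits a map to $U$ which is a Zariski-locally trivial $\PP^{r-2}$-fibration over $U \setminus S$ and has fibre $\PP^{r-1}$ over $S$, and run the motivic decomposition of this "degenerate projective bundle" by a localization argument over $U$, splitting into the part over $U\setminus S$ (giving $\bigoplus_{i=0}^{r-2} h(U)(-i)$ after handling the compactification) and the extra part over $S$ (giving $h(S)(1-r)$).

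The main obstacle I expect is the bookkeeping of Tate twists and the precise excess-intersection contribution when one "peels off" the copies of $h(U)$: it is easy to be off by a twist or to double-count the factor $h(U)(-(r-1))$ versus $h(S)(1-r)$, and one must check that the constructed projectors are genuinely orthogonal idempotents summing to the identity of $h(X)$, which requires the intersection-theoretic identity $\xi^r = \pi^\ast(c_1(E))\xi^{r-1} - \cdots$ restricted to $X$ together with the self-intersection formula for $S \subset X$. Verifying compatibility of all the correspondences — rather than merely matching Chow groups dimension by dimension — is the technical heart; I would handle it by reducing, via the splitting principle or a Mayer--Vietoris/localization argument on $U$, to the case where $E$ is trivial (or split), where $\PP(E) = \PP^{r-1}\times U$ and everything becomes an explicit computation, and then glue.
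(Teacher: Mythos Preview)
The paper does not actually prove this statement: its entire proof is a one-line citation to \cite[Theorem 3.1]{Ji}, with the remark that both the isomorphism and its inverse are explicitly described there. So there is no argument on the paper's side to compare against, beyond noting that Jiang constructs explicit mutually inverse correspondences.

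Your first approach contains a genuine error. You assert that $\PP(E)\setminus X$ is an affine bundle over $S$, but the opposite holds: over $u\in S$ one has $s(u)=0$, hence $w$ vanishes on the entire fibre $\PP(E_u)$ and the fibre of $\PP(E)\setminus X$ over $u$ is \emph{empty}; over $u\notin S$ the fibre is $\PP^{r-1}$ minus a hyperplane, i.e.\ $\A^{r-1}$. Thus $\PP(E)\setminus X$ is an $\A^{r-1}$-bundle over $U\setminus S$, not over $S$, and the localisation triangle you set up does not produce $h(S)(1-r)$ as claimed. Your second paragraph, which splices this into the projective bundle formula, therefore collapses.

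Your ``alternatively'' paragraph has the correct geometry and is essentially how the argument in \cite{Ji} runs: $\pi|_X\colon X\to U$ is a $\PP^{r-2}$-fibration over $U\setminus S$ with full fibre $\PP^{r-1}$ over $S$, so $j\colon\PP(E|_S)\hookrightarrow X$ is a closed embedding of codimension $r-1$ whose complement is a genuine $\PP^{r-2}$-bundle over $U\setminus S$. The decomposition is realised by the correspondences built from $\pi|_X$, powers of $\xi|_X$, and the span $S\leftarrow \PP(E|_S)\hookrightarrow X$. Your anticipated obstacle---verifying that these projectors are orthogonal idempotents summing to the identity---is exactly where the work lies, and it does not reduce via the splitting principle in the category of Chow motives as you suggest at the end; one must check the identities directly using the relation for $\xi^{r}$ and the normal bundle of $\PP(E|_S)$ in $X$.
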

    
    \begin{proof} This is \cite[Theorem 3.1]{Ji}. Both the isomorphism and its inverse are explicitly described.
     \end{proof}

\begin{remark} In the set-up of Theorem \ref{ji}, a cohomological relation between $X$, $S$ and $U$ was established in \cite[Prop. 4.3]{Ko} (cf. also \cite[section 3.7]{IM0}, as well as \cite[Proposition 46]{BFM} for a generalization). A relation on the level of derived categories was established in \cite[Theorem 2.10]{Or} (cf. also \cite[Theorem 2.4]{KKLL} and \cite[Proposition 47]{BFM}).
\end{remark}

\subsection{Linear sections of $\Gr(2,n)$}

\begin{proposition}\label{2n} Let
   \[ Y:= \Gr(2,n)\cap H_1\cap \cdots\cap H_s \ \ \ \subset\ \PP^{{n\choose 2}-1} \]
  be a smooth dimensionally transverse intersection with $s$ hyperplanes (with respect to the Pl\"ucker embedding).   
  Assume $s\le 2$. Then
      \[ A_i^{hom}(Y)=0\ \ \ \forall\ i\ .\]
\end{proposition}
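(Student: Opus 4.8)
The plan is to reduce everything to known facts about Grassmannians of planes and their linear sections. Recall first that $\Gr(2,n)$ itself has trivial Chow groups in the sense that $A^*(\Gr(2,n))$ is a finite-dimensional $\QQ$-vector space generated by algebraic classes (the Schubert cycles), so in particular $A_i^{hom}(\Gr(2,n))=0$ for all $i$. The strategy is then an induction on the number $s$ of hyperplanes, establishing the statement for $s=0$ (trivial), then $s=1$, then $s=2$. For each step I would use the same mechanism: relate the Chow groups of the linear section $Y$ to those of the ambient space via a correspondence argument, most cleanly via the theory of ``spread'' / Lefschetz-type arguments, or alternatively by invoking an explicit motivic decomposition.

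The cleanest route is probably to note that linear sections of $\Gr(2,n)$ of small codimension have been studied extensively, and that for $s \le 2$ the variety $Y$ still has a very simple motive. Concretely, one approach: for $s=1$, a smooth hyperplane section $Y=\Gr(2,n)\cap H$ has the property that its primitive cohomology is concentrated in a single degree (the middle one) and is in fact of Tate type for these low-dimensional Grassmannians (or more precisely, one can appeal to \cite{hyper} or to the classical computation of the cohomology of hyperplane sections of $\Gr(2,n)$, which shows the Hodge structure is trivial, i.e. the motive is a sum of Tate twists). Since the motive of $Y$ is then a direct sum of Lefschetz motives, $A_i^{hom}(Y)=0$ for all $i$ follows immediately. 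For $s=2$, I would iterate: write $Y = (\Gr(2,n)\cap H_1)\cap H_2$ and apply the same reasoning to the smooth (by dimensional transversality) variety $\Gr(2,n)\cap H_1$ in place of $\Gr(2,n)$, using that this intermediate variety already has trivial Chow groups by the $s=1$ case and that its hyperplane section retains a Hodge structure of Tate type in the relevant range.

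Alternatively — and this is likely the argument the author intends, given the surrounding material — one can use the Cayley trick (Theorem \ref{ji}) in reverse: a codimension-$s$ linear section of $\Gr(2,n)$ is the zero locus of a section of $\OO(1)^{\oplus s}$, hence corresponds under Cayley to a hyperplane section $X$ of the projective bundle $\PP(\OO_{\Gr(2,n)}(1)^{\oplus s}) \cong \Gr(2,n)\times\PP^{s-1}$, and one reads off $h(Y)$ from $h(X)$ together with copies of $h(\Gr(2,n))$. Since $\Gr(2,n)\times\PP^{s-1}$ has trivial Chow groups, one is reduced to controlling $A^{hom}$ of a hyperplane section $X$ of this product, which for $s\le 2$ is accessible.

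The main obstacle I anticipate is establishing that the (primitive) Hodge structure of the hyperplane section is genuinely of Tate type — equivalently, that there is no ``interesting'' transcendental part — for all $n$; this is a cohomological input that must be verified (e.g. via Lefschetz hyperplane theorem plus the known cohomology of $\Gr(2,n)$, or by a vanishing-of-Hodge-numbers computation), and without it the motivic decomposition into Lefschetz motives is not available. If that Hodge-theoretic input fails for large $n$ in the $s=2$ case, one would instead have to argue more carefully in the relevant range of $i$ only, using a Bloch–Srinivas type argument to push homologically trivial cycles off a divisor. But for $s \le 2$ I expect the cohomology to remain of Tate type, so this obstacle should be surmountable by a direct citation or short computation.
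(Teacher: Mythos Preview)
Your proposal has a genuine gap at its core. You repeatedly pass from ``the Hodge structure of $Y$ is of Tate type'' to ``the Chow motive of $Y$ is a sum of Lefschetz motives'' (and hence $A_i^{hom}(Y)=0$), but this implication is exactly the generalized Bloch conjecture and is not available unconditionally. Knowing that all the cohomology of $Y$ is algebraic gives you nothing about $A_i^{hom}(Y)$ without an additional input such as Kimura finite-dimensionality, and you have not supplied one. You also invoke Bloch--Srinivas as a fallback, but that argument runs in the opposite direction (trivial Chow groups in a range $\Rightarrow$ coniveau estimate on cohomology), so it cannot rescue the step you need. Your Cayley-trick alternative does not close the gap either: it trades $Y$ for a hyperplane section $X$ of $\Gr(2,n)\times\PP^{s-1}$, and your claim that $A^{hom}_\ast(X)=0$ ``is accessible'' is precisely the same kind of unproven statement you started with.

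The paper's proof is entirely different and purely geometric. Following Donagi, one fixes a hyperplane $P\subset\PP(V_n)$ and considers the rational map $\Gr(2,V_n)\dashrightarrow P$ sending a line to its intersection with $P$; this is resolved by blowing up $\sigma_{11}(P)\cong\Gr(2,n-1)$. Restricting to $Y$ and blowing up $\sigma_{11}(P)\cap Y$, one obtains a morphism $\Gamma_Y\colon\wt{Y}\to P$. For $s=1$ and $P$ generic this is a $\PP^{n-3}$-fibration, so $\wt{Y}$ (hence $Y$) has trivial Chow groups directly from the projective-bundle formula. For $s=2$ it is a $\PP^{n-4}$-fibration over the complement of finitely many points where the fiber jumps to $\PP^{n-3}$; Theorem~\ref{ji} (the Cayley trick, but applied to this explicit fibration over $P$, not to $\OO(1)^{\oplus s}$ on $\Gr(2,n)$) then yields triviality of the Chow groups of $\wt{Y}$ and hence of $Y$. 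The point is that one never computes a single Hodge number: the triviality of $A^{hom}_\ast$ is forced by exhibiting $\wt{Y}$ as (essentially) a projective bundle over projective space.
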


\begin{proof} This uses a geometric construction that can be found in \cite{Don}.

Let $P\subset\PP(V_n)$ be a fixed hyperplane, and consider (as in \cite[Section 2.3]{Don}) the rational map
  \[  \Gr(2,V_n)\ \dashrightarrow\ P \]
  sending a line in $\PP(V_n)$ to its intersection with $P$. This map is resolved by blowing up a subvariety $\sigma_{11}(P)\cong \Gr(2,n-1)$, resulting in a morphism
   \[ \Gamma\colon\ \ \wt{\Gr}\ \to\ P \]
   (where $\wt{\Gr}\to \Gr(2,V_n)$ denotes the blow-up with center $\sigma_{11}(P)$).

Let $\wt{Y}\to Y$ be the blow-up of $Y$ with center $\sigma_{11}(P)\cap Y$, and let us consider the morphism
  \[ \Gamma_Y\colon\ \ \wt{Y}\ \to\ P \ ,\]
  obtained by restricting $\Gamma$.
  
  In case $s=1$ and $P$ is generic with respect to $Y$, the morphism $\Gamma_Y$ is a $\PP^{n-3}$-fibration over $P$. It follows that $\wt{Y}$, and hence $Y$, has trivial Chow groups.
  
  In case $s=2$, and $P$ chosen generically with respect to $Y$, the morphism $\Gamma_Y$ is generically a $\PP^{n-4}$-fibration over $P$, and there are finitely many points in $P$ where the fiber is $\PP^{n-3}$. Applying Theorem \ref{ji}, this implies that $\wt{Y}$, and hence also $Y$, has trivial Chow groups.
\end{proof}

  \subsection{Hyperplane sections of $\Gr(3,n)$}
  
  \begin{theorem}\label{3n} Let  
      \[ Y:= \Gr(3,n)\cap H \ \ \ \subset\ \PP^{{n\choose 3}-1} \]
  be a smooth hyperplane section (with respect to the Pl\"ucker embedding).   
  Then
      \[ A_i^{hom}(Y)=0\ \ \ \forall\ i\le n-3\ .\]
      Moreover, in case $n\le 11$ or $n=13$ we have
      \[  A_{i}^{hom}(Y)=0\ \ \ \forall\ i\le n-2\ .\]
    \end{theorem}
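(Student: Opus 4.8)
This is a result of \cite{hyper}. I will sketch how one proves it, using only Proposition \ref{2n} together with a spreading-out argument. Write $Y=\Gr(3,V_n)\cap H$, where $H=H_\omega$ is the Pl\"ucker hyperplane attached to an alternating $3$-form $\omega$ on $V_n$. The plan is to compare $Y$ with a family of Pl\"ucker hyperplane sections of $\Gr(2,n-1)$, to which Proposition \ref{2n} applies.

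Introduce the partial flag variety $F=\{(\ell,\Lambda):\ell\subset\Lambda\subset V_n,\ \dim\ell=1,\ \dim\Lambda=3\}$, with its two projections
  \[ \Gr(3,V_n)\ \xleftarrow{\ p\ }\ F\ \xrightarrow{\ q\ }\ \PP(V_n)\ .\]
The map $p$ is a $\PP^2$-bundle (the fibre over $\Lambda$ being $\PP(\Lambda)$), while the fibre of $q$ over $[v]$ is $\Gr(2,V_n/v)\cong\Gr(2,n-1)$. Set $\wt Y:=p^{-1}(Y)=\{(\ell,\Lambda):\ell\subset\Lambda,\ \omega\vert_\Lambda=0\}$. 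Then $p\colon\wt Y\to Y$ is a $\PP^2$-bundle, so by the projective bundle formula $A_i^{hom}(Y)=0$ for all $i\le c$ if and only if $A_i^{hom}(\wt Y)=0$ for all $i\le c$. On the other hand the fibre of $q\colon\wt Y\to\PP(V_n)$ over $[v]$ is
  \[ \{\Lambda\ni v:\ \omega\vert_\Lambda=0\}\ =\ \Gr(2,V_n/v)\cap H_{\iota_v\omega}\ \subset\ \Gr(2,n-1)\ ,\]
the Pl\"ucker hyperplane section of $\Gr(2,n-1)$ cut out by the contracted form $\iota_v\omega$, an alternating $2$-form on $V_n/v$. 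For $[v]$ general this form is as non-degenerate as possible, so by Proposition \ref{2n} (with $s=1$) the general fibre of $q$ has trivial Chow groups.

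Now I would spread this out: a Bloch--Srinivas type argument relative to $q$ gives a decomposition of the diagonal of $\wt Y$ as a part pulled back from $\PP(V_n)$ --- which acts trivially on $A_*^{hom}$, since $\PP(V_n)$ has trivial Chow groups --- plus a part supported over the closed locus $B\subset\PP(V_n)$ consisting of those $[v]$ for which $\iota_v\omega$ is too degenerate for Proposition \ref{2n} (or rather its proof) to yield triviality of the fibre $\Gr(2,V_n/v)\cap H_{\iota_v\omega}$. This forces $A_i^{hom}(\wt Y)$, hence $A_i^{hom}(Y)$, to vanish for $i\le n-2-\dim B$. A dimension count based on the Pfaffian-type stratification of the linear family $\{\iota_v\omega\}_{[v]\in\PP(V_n)}$ of alternating forms --- together with the analysis of the mildly degenerate hyperplane sections of $\Gr(2,n-1)$ occurring over $B$ --- yields $\dim B\le 1$ in general (hence the first assertion) and $\dim B\le 0$ when $n\le 11$ or $n=13$ (hence the second). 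This bound on $\dim B$ is exactly what degrades for $n=12$ and for $n\ge 14$.

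The main obstacle is this final dimension count. One has to decide, for each corank of an alternating $2$-form $\eta$ on an $(n-1)$-dimensional space, whether $\Gr(2,n-1)\cap H_\eta$ still has trivial Chow groups --- which one checks by running the projection-to-a-fixed-hyperplane argument of Proposition \ref{2n} for a non-generic $\eta$ --- and then to bound the dimension of the locus in $\PP(V_n)$ over which $\iota_v\omega$ has a corank not covered by that analysis. The delicate point is that for small $n$ the forms $\iota_v\omega$ that arise fall into a short, explicitly describable list of $\mathrm{GL}(V_n/v)$-orbits; this is what makes the sharper bound available, and it is precisely this finiteness that is lost for larger $n$.
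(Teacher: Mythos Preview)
The paper does not give an independent proof here; it simply cites \cite{hyper}, noting that the argument there uses the notion of \emph{jumps} between Grassmannians from \cite{BFM}. Your flag-variety construction, relating $Y$ to a family of Pl\"ucker hyperplane sections of $\Gr(2,n-1)$ fibred over $\PP(V_n)$, is a natural idea and is in the same spirit, so the overall strategy is reasonable as an outline.

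The genuine gap is at exactly the step you yourself flag as ``the main obstacle'': the bound on $\dim B$. You assert that a Pfaffian-type stratification of the family $\{\iota_v\omega\}$, together with an analysis of the mildly degenerate hyperplane sections of $\Gr(2,n-1)$, yields $\dim B\le 1$ in general and $\dim B\le 0$ for $n\le 11$ or $n=13$, but neither computation is carried out, and the justification you offer for the small-$n$ refinement does not actually distinguish these cases. The contracted forms $\iota_v\omega$ are alternating $2$-forms on an $(n-1)$-dimensional space, and such forms are classified by their rank for \emph{every} $n$; there is nothing about the $\mathrm{GL}(V_n/v)$-orbit structure of $2$-forms that singles out $n\le 11$ or $n=13$ as opposed to $n=12$ or $n\ge 14$. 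Whatever produces this specific numerology must come from somewhere else---presumably from how the rank stratification of $v\mapsto\iota_v\omega$ behaves for a $3$-form $\omega$ defining a \emph{smooth} $Y$, and from exactly which singular sections of $\Gr(2,n-1)$ can still be shown to have trivial Chow groups---and this is precisely the content that is missing. Without it, neither the general bound $i\le n-3$ nor the sharper bound for small $n$ has been established by your sketch.
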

    
    \begin{proof} This is \cite[Theorems 3.1 and 3.2]{hyper}, which uses the notion of {\em jumps\/} between Grassmannians as developed in \cite{BFM}.
     \end{proof}

   \section{Main results}
   
   \subsection{Projections} As in \cite{BFM}, let $I_r \Gr(k,n)\subset \Gr(k,n)$ parametrize linear subspaces that are isotropic with respect to $r$ fixed generic skew-forms. One has
     \[ \dim I_r \Gr(k,n)=k(n-k)-r{k\choose 2}\ .\]
  For example, $I_r \Gr(2,n)$ is just the intersection of $\Gr(2,n)$ with $r$ Pl\"ucker hyperplanes. The case $I_2 \Gr(k,n)$ is studied in detail in \cite{Bene}.
    
  To relate hyperplane sections of different symplectic Grassmannians, Bernardara--Fatighenti--Manivel \cite{BFM} have developed a theory of {\em projections\/}.
   The starting point is a rational map
     \[ \pi\colon \Gr(k,n+1)\ \dashrightarrow\ \Gr(k,n) \ ,\]
     determined by the choice of a line in the $n+1$-dimensional vector space. If $Y^\prime$ is a hyperplane section of $I_r \Gr(k,n+1)$, one can restrict $\pi$ to $Y^\prime$. A detailed analysis
     of the case $k=3$ yields the following:
   
   \begin{theorem}[\cite{BFM}]\label{proj} Assume $n$ is even and $r\le 1$, or $n$ is odd and $r=0$. Let 
     \[Y^\prime:= I_r \Gr(3,n+1)\cap H\] 
     be a smooth hyperplane section. There exists
   a commutative diagram
     \[ \begin{array}[c]{ccccccc}
                    E & \hookrightarrow& \wt{Y}^\prime& \hookleftarrow & F &&\\
                    &&&&&&\\
                    \downarrow&&\ \ \downarrow{\scriptstyle\sigma}& \  \searrow{\scriptstyle \tau}&& \  \searrow{\scriptstyle q}   &\\
                    &&&&&&\\
                    Z^\prime&\hookrightarrow&Y^\prime&\stackrel{\pi^\prime}{\dashrightarrow}&I_r \Gr(3,n) &\hookleftarrow& Y\\
                    \end{array}\]
  where $Y:= I_{r+1} \Gr(3,n)\cap H$ is a smooth hyperplane section. The morphism $\sigma$ is the blow-up with center $Z^\prime\cong I_{r+1} \Gr(2,n)$. The morphism $q$ is a
   $\PP^3$-fibration, while $\tau$ is a $\PP^2$-fibration over the complement of $Y$.
    \end{theorem}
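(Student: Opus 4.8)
The plan is to run the standard ``projection'' machinery: realise $\pi'$ as the restriction of the linear projection $\pi\colon\Gr(3,n+1)\dashrightarrow\Gr(3,n)$, resolve it by one blow-up, and then read off the fibre structure of the resulting morphism. The decisive choice is the line $\ell\subset V_{n+1}$ defining $\pi$: when $r=1$ one must take $\ell=\ker\omega_1$ — legitimate precisely because $n+1$ is odd, so that a generic skew form on $V_{n+1}$ has a canonical $1$-dimensional kernel — and when $r=0$ the line is arbitrary; this is exactly what forces the parity hypothesis. Fixing a splitting $V_{n+1}=\ell\oplus V_n$, so $\Gr(3,n)=\Gr(3,V_{n+1}/\ell)$, the form $\omega_1$ descends (when $r=1$) to a non-degenerate form $\bar\omega_1$ on the even-dimensional $V_n$, and the Pl\"ucker equation $\phi\in\wedge^3V_{n+1}^\ast$ of $H$ decomposes as $\phi_0+e_0^\ast\wedge\psi$ with $\phi_0\in\wedge^3V_n^\ast$ and $\psi\in\wedge^2V_n^\ast$. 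Thus $\psi$ is an ``extra'' skew form and $\phi_0$ an ``extra'' Pl\"ucker equation on $V_n$, and these together with $\bar\omega_1,\dots,\bar\omega_r$ cut out the target $Y=I_{r+1}\Gr(3,n)\cap H$, where $I_{r+1}\Gr(3,n)$ is defined by $\bar\omega_1,\dots,\bar\omega_r,\psi$.

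The indeterminacy locus of $\pi$ is $\Sigma:=\{W\in\Gr(3,n+1)\ :\ \ell\subset W\}\cong\Gr(2,n)$, and blowing it up makes $\pi$ a morphism $\Gamma\colon\wt{\Gr}\to\Gr(3,n)$ (the Grassmannian analogue of resolving a linear projection of $\PP^N$, as in the proof of Proposition~\ref{2n}); this does not involve the forms. Restricting to $Y'=I_r\Gr(3,n+1)\cap H$: a point of the base locus $Z'=\Sigma\cap Y'$ is a $3$-space $W$ containing $\ell$, hence of the form $W=\ell\oplus W'$ with $W'\subset V_n$ two-dimensional, and since $\ell\subset\ker\omega_1$ the isotropy conditions on $W$ reduce to ``$W'$ isotropic for $\bar\omega_1,\dots,\bar\omega_r$'' while $W\in H$ reduces to ``$W'$ is $\psi$-isotropic''; hence $Z'\cong I_{r+1}\Gr(2,n)$, smooth of the expected dimension by genericity of the data. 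One deduces that the strict transform of $Y'$ in $\wt{\Gr}$ is the blow-up $\sigma\colon\wt{Y}'=\mathrm{Bl}_{Z'}Y'\to Y'$, with exceptional divisor $E$, and that $\tau:=\Gamma|_{\wt{Y}'}\colon\wt{Y}'\to\Gr(3,n)$ is a well-defined morphism.

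The technical heart is the fibre analysis of $\tau$. Over $\bar W\in\Gr(3,n)$, with $\tilde{\bar W}:=\ell\oplus\bar W$ its $4$-dimensional preimage, the fibre is the closure in $\wt{\Gr}$ of the set of $3$-spaces $W\subset\tilde{\bar W}$ with $\ell\not\subset W$ that are isotropic for every $\omega_i|_{\tilde{\bar W}}$ and lie on $H$. A skew form on a $4$-space has an isotropic $3$-plane only if it is degenerate; since $\ell\subset\ker\omega_1$ the form $\omega_1|_{\tilde{\bar W}}$ is degenerate, but if its rank were $2$ its radical would contain $\ell$ and force every isotropic $3$-plane to contain $\ell$ — so one needs rank $0$, i.e. $\bar W$ isotropic for $\bar\omega_1$; carrying this out for all the forms identifies $\ima(\tau)=I_r\Gr(3,n)$. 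For such $\bar W$ the $3$-spaces $W\subset\tilde{\bar W}$ with $\ell\not\subset W$ sweep out an $\A^3$ with closure $\PP^3$ in $\wt{\Gr}$, and $W\in H$ imposes one linear condition on this $\PP^3$, cutting out a $\PP^2$ unless the functional $W\mapsto\phi(\wedge^3W)$ vanishes identically on $\wedge^3\tilde{\bar W}$ — which happens iff $\phi_0(\wedge^3\bar W)=0$ and $\bar W$ is $\psi$-isotropic, iff $\bar W\in Y$. So $\tau$ is a $\PP^2$-fibration over $I_r\Gr(3,n)\setminus Y$; setting $F:=\tau^{-1}(Y)$, the map $q:=\tau|_F\colon F\to Y$ is a $\PP^3$-fibration, and tracking how $E$ and $F$ sit inside $\wt{Y}'$ and map to $Z'$, $Y'$, $I_r\Gr(3,n)$ and $Y$ assembles the full commutative diagram.

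The step I expect to resist is the fibre analysis itself: pinning down $\ima(\tau)=I_r\Gr(3,n)$ on the nose (including the boundary behaviour over the locus where the open fibre is empty but the closed fibre — living in $E$ — is not), and upgrading ``$q$ has $\PP^3$ fibres'' to ``$q$ is a projective bundle''. Everything hinges on the decompositions $\omega_i=\bar\omega_i+e_0^\ast\wedge\alpha_i$ (with $\alpha_1=0$ by the choice $\ell=\ker\omega_1$) and $\phi=\phi_0+e_0^\ast\wedge\psi$, and on the key observation that the skew form turning $I_r\Gr(3,n)$ into $I_{r+1}\Gr(3,n)$ is precisely the $\psi$-component of $H$; the smoothness and dimension bookkeeping then use genericity of $H$ and the forms, while the parity of $n$ is used exactly in the choice of $\ell$.
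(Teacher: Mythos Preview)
Your argument is correct and is precisely the construction of \cite[Section 3.2]{BFM}; the paper itself gives no independent proof of this statement, it simply cites loc.\ cit.\ and remarks that the parity hypotheses on $n$ and $r$ are what guarantee smoothness of the target $I_r\Gr(3,n)$ and of $Y$. Your detailed unpacking --- choosing $\ell=\ker\omega_1$ when $r=1$ (legitimate since $n+1$ is odd), decomposing $\phi=\phi_0+e_0^\ast\wedge\psi$ so that $\psi$ furnishes the extra skew form, and the rank analysis of $\omega_1|_{\tilde{\bar W}}$ to pin down $\ima(\tau)$ --- is exactly the content of the cited section, so there is nothing to compare.
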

        
    \begin{proof} This is contained in \cite[Section 3.2]{BFM} (NB: note that our $n$ is $n-1$ in loc. cit.). As explained in loc. cit., the assumptions on $n$ and $r$ guarantee that the target $I_r \Gr(3,n)$ and the hyperplane section $Y$ are generic and hence smooth.
     \end{proof}

 \subsection{Motivating the conjecture}
 \label{ss:mot}
 
 As a consequence of Theorem \ref{proj}, one can compute the Hodge level of hyperplane sections $Y$ of symplectic Grassmannians: surprisingly, it turns out that (at least for $n>8$) $Y$ is of ``Calabi--Yau type'': 
  
 \begin{theorem}[\cite{BFM}]\label{coho} Let 
   \[ Y:= I_1\Gr(3,n)\cap H \ \ \ \subset\ \PP^{{n\choose k}-1} \]
  be a smooth hyperplane section (with respect to the Pl\"ucker embedding). 
  Assume $n>8$. Then $Y$ has Hodge coniveau $n-3$. More precisely, the Hodge numbers verify
    \[   h^{p,\dim Y-p}(Y) =\begin{cases}   1 & \hbox{if}\ p=n-3\ ,\\
                                                                 0 & \hbox{if}\ p<n-3\ .\\
                                                                 \end{cases}\]
                   \end{theorem}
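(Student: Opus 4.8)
The plan is to feed the projection diagram of Theorem~\ref{proj} into the Cayley trick of Theorem~\ref{ji}, reducing the computation to the hyperplane section of the \emph{ordinary} Grassmannian $\Gr(3,n+1)$, whose cohomology is accessible by Borel--Weil--Bott. First a reduction: $I_1\Gr(3,n)$ is a (possibly odd) symplectic Grassmannian, so it has a cell decomposition and $H^\bullet(I_1\Gr(3,n),\QQ)$ is of Tate type; by weak and hard Lefschetz, $H^k(Y,\QQ)\cong H^k(I_1\Gr(3,n),\QQ)$ for all $k\neq\dim Y$. Since a Tate class of weight $\dim Y$ sits on the Hodge diagonal, and $n-3<\tfrac12\dim Y$ for $n>8$ (recall $\dim Y=3n-13$), the Hodge numbers $h^{p,\dim Y-p}(Y)$ with $p\le n-3$ are computed by the non-Tate summand $V$ of $H^{\dim Y}(Y,\QQ)$, and it suffices to show $h^{p,\dim Y-p}(V)=1$ for $p=n-3$ and $=0$ for $p<n-3$.

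Now take $r=0$ in Theorem~\ref{proj}: $Y'=\Gr(3,n+1)\cap H$, $Z'\cong I_1\Gr(2,n)$, the target is $I_0\Gr(3,n)=\Gr(3,n)$, and $\sigma\colon\wt Y'\to Y'$ is the blow-up with centre $Z'$. The behaviour of $\tau$ recorded there — a $\PP^2$-fibration over $\Gr(3,n)\setminus Y$ that degenerates to a $\PP^3$-fibration over $Y$ — is exactly that of a Cayley configuration over $\Gr(3,n)$ for the rank-$4$ bundle $\mathcal E:=\Lambda^2\mathcal S^\vee\oplus\OO_{\Gr(3,n)}(1)$, with $\mathcal S$ the tautological subbundle: the first summand cuts out $I_1\Gr(3,n)$ and the second the Pl\"ucker hyperplane, so $Y$ is the zero locus of a regular section $s\in H^0(\Gr(3,n),\mathcal E)$, and — this identification is the geometric content of \cite{BFM} — $\wt Y'$ is the hypersurface in $\PP(\mathcal E)$ cut out by the section of $\OO_{\PP(\mathcal E)}(1)$ corresponding to $s$ in the sense of Theorem~\ref{ji}. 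Theorem~\ref{ji} and the blow-up formula then yield
\[ h(Y)(-3)\,\oplus\,\bigoplus_{i=0}^{2}h(\Gr(3,n))(-i)\ \cong\ h(\wt Y')\ \cong\ h(Y')\,\oplus\,\bigoplus_{i=1}^{n-3}h(Z')(-i)\quad\hbox{in }\MM_{\rm rat}. \]
Both $\Gr(3,n)$ and $Z'=I_1\Gr(2,n)$ (the latter by Proposition~\ref{2n}) have motive a sum of Tate motives. Passing to Betti realizations and cancelling Tate summands (legitimate in the semisimple category of polarizable Hodge structures), the only non-Tate contributions occur in degree $\dim\wt Y'=\dim Y'$ and give $V(-3)\cong V'$, where $V'$ is the non-Tate summand of $H^{\dim Y'}(Y',\QQ)$; the same comparison in the remaining degrees re-confirms that $V$ accounts for all the non-Tate cohomology of $Y$.

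It remains to identify $V'$, which is the statement of the theorem for the ordinary Grassmannian: $\Gr(3,n+1)\cap H$ has Hodge coniveau $n$, with $h^{n,\,2n-7}(V')=1$ and $h^{p,\,3n-7-p}(V')=0$ for $p<n$. This follows from Borel--Weil--Bott on the homogeneous space $X:=\Gr(3,n+1)$: one computes the groups $H^q(X,\Omega^p_X(-1))$ and propagates them through the exact sequences
\begin{gather*}
0\to\Omega^p_X(-1)\to\Omega^p_X\to\Omega^p_X|_{Y'}\to0,\\
0\to\Omega^{p-1}_{Y'}(-1)\to\Omega^p_X|_{Y'}\to\Omega^p_{Y'}\to0,
\end{gather*}
which pin the single ``new'' middle Hodge class of $Y'$ to bidegree $(n,2n-7)$. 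Since a Tate twist by $(3)$ shifts bidegrees by $(-3,-3)$, the isomorphism $V(-3)\cong V'$ gives $h^{n-3,\,2n-10}(V)=1$ and $h^{p,\,3n-13-p}(V)=0$ for $p<n-3$; together with the first paragraph this is the assertion (the inequalities $n-3<\tfrac12\dim Y$ and $n<\tfrac12\dim Y'$ needed along the way both hold because $n>8$).

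The main obstacle is the middle step — confirming that $\wt Y'=\mathrm{Bl}_{Z'}Y'$ genuinely is the Cayley hypersurface attached to $(\Gr(3,n),\mathcal E,s)$: getting the rank and the twist of $\mathcal E$ right, and matching the two sections under $H^0(\Gr(3,n),\mathcal E)\cong H^0(\PP(\mathcal E),\OO_{\PP(\mathcal E)}(1))$, which is precisely what licenses the application of Theorem~\ref{ji}. This is supplied by the projection analysis of \cite{BFM}; granting it, separating Tate from non-Tate summands and running Borel--Weil--Bott on $\Gr(3,n+1)$ are routine.
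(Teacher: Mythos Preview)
Your approach is correct and essentially the same as the paper's: both use the projection of Theorem~\ref{proj} with $r=0$ to pass from $Y$ to $Y'=\Gr(3,n+1)\cap H$ via the Cayley/blow-up identity (the paper simply cites this as \cite[Proposition~6]{BFM}, and later re-derives it motivically as Proposition~\ref{mot}, which is exactly what you do inline), cancel the Tate contributions from $\Gr(3,n)$ and $I_1\Gr(2,n)$, and read off the Hodge numbers of $Y$ from those of $Y'$ (the paper cites \cite[Theorem~3]{BFM}; you sketch the Borel--Weil--Bott computation). One minor discrepancy worth flagging: in the proof of Proposition~\ref{mot} the paper identifies the Cayley bundle as $E=\OO\oplus\UU^\ast$ rather than your $\Lambda^2\mathcal S^\vee\oplus\OO(1)$, but since you explicitly defer this identification to \cite{BFM} and only the rank matters for the shape of the resulting isomorphism, this does not affect the validity of your argument.
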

                   
                   \begin{proof} This is implicit in \cite{BFM}, as we now explain. Let the set-up be as in Theorem \ref{proj}. Then
                    \cite[Proposition 6]{BFM} relates $Y$ and $Y^\prime$ on the level of cohomology: one has an isomorphism of Hodge structures
           \begin{equation}\label{hs}         \begin{split}    H^{j-6}(Y,\QQ)(-3) \oplus \bigoplus_{i=0}^2 H^{j-2i}(I_r\Gr(3,n),\QQ)(-i) \ \xrightarrow{\cong}&\  H^j(Y^\prime,\QQ) \oplus\\ 
           &\bigoplus_{i=1}^{c-1}  H^{j-2i}(I_{r+1}\Gr(2,n),\QQ)(-i)\\
                \end{split}\end{equation}
             (where $c$ denotes the codimension of $Z^\prime$ in $Y^\prime$).                    
           Setting $r=0$ and combining with \cite[Theorem 3]{BFM} (which gives the Hodge numbers of $Y^\prime$), plus the fact that $\Gr(3,n)$ and $I_1 \Gr(2,n)$ have algebraic cohomology, this gives the required Hodge numbers of $Y$.
                   \end{proof}
                   
   Theorem \ref{coho} motivates Conjecture \ref{conj}. Indeed,               
   the {\em generalized Bloch conjecture\/} \cite[Conjecture 1.10]{Vo} predicts that any variety $Y$ with Hodge coniveau $\ge c$ has
    \[ A_i^{hom}(Y)=0\ \ \ \forall\ i<c\ .\]                                                             
  Note that at least for $n>8$, the bound of Conjecture \ref{conj} is optimal: assuming $A_i^{hom}(Y)=0$ for $j\le n-3$ and applying the Bloch--Srinivas argument \cite{BS}, one would get the vanishing $h^{n-3,\dim Y -n+3}(Y)=0$, contradicting Theorem \ref{coho}.                                                            
 
   \subsection{A relation of motives}
     
   The cohomological relation \eqref{hs} between $Y$ and $Y^\prime$ also exists as (and actually is implied by) a relation on the level of the Grothendieck ring of varieties \cite[Proposition 4]{BFM}, and on the level of derived categories \cite[Proposition 5]{BFM}. To complete the picture, we now lift the relation \eqref{hs} to the level of Chow motives:   
   
   \begin{proposition}\label{mot} Let notation and assumptions be as in Theorem \ref{proj}. There is an isomorphism of integral Chow motives
     \[  \begin{split}    h(Y)(-3) \oplus \bigoplus_{i=0}^2 h(I_r\Gr(3,n))(-i) \ \xrightarrow{\cong}\  h(Y^\prime) \oplus \bigoplus_{i=1}^{c-1}  h(I_{r+1}\Gr(2,n))(-i)&\\
             \ \ \ \hbox{in}\ \MM^{\ZZ}_{\rm rat}&\ .\\
             \end{split}\]
             (Here $c$ denotes the codimension of $Z^\prime$ in $Y^\prime$.)
     \end{proposition}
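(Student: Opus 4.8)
The plan is to upgrade the geometric decomposition of Theorem \ref{proj} from a diagram of varieties to an identity of Chow motives, by applying the blow-up formula and the projective-bundle formula to the three maps $\sigma$, $\tau$, $q$ appearing in that diagram. First I would work on the blow-up side: since $\sigma\colon \wt Y^\prime\to Y^\prime$ is the blow-up along the smooth center $Z^\prime\cong I_{r+1}\Gr(2,n)$ of codimension $c$, the classical blow-up formula for Chow motives (valid with integral coefficients, see \cite{MNP}) gives
\[ h(\wt Y^\prime)\ \cong\ h(Y^\prime)\ \oplus\ \bigoplus_{i=1}^{c-1} h(Z^\prime)(-i)\ \ \ \hbox{in}\ \MM^{\ZZ}_{\rm rat}\ .\]
This already produces the summands $\bigoplus_{i=1}^{c-1} h(I_{r+1}\Gr(2,n))(-i)$ on the right-hand side of the claimed isomorphism.

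Next I would analyze the two maps out of $\wt Y^\prime$ on the other side of the diagram. The map $q\colon F\to Y$ is a $\PP^3$-fibration, so the projective-bundle formula gives $h(F)\cong\bigoplus_{i=0}^3 h(Y)(-i)$; however, I do not want $h(F)$ but rather $h(\wt Y^\prime)$, and the relevant map is $\tau\colon \wt Y^\prime\to I_r\Gr(3,n)$, which is a $\PP^2$-fibration only over the \emph{complement} $U:=I_r\Gr(3,n)\setminus Y$, with $F=\tau^{-1}(Y)$ a $\PP^3$-fibration over $Y$. The way to extract a clean motivic identity from such a ``stratified projective bundle'' is exactly the Cayley-trick mechanism of Theorem \ref{ji}: as is made explicit in \cite{BFM}, $\wt Y^\prime$ is the zero locus of a section of $\OO_{\PP(E)}(1)$ on the projectivization $\PP(E)$ of a rank-$4$ bundle $E$ over $I_r\Gr(3,n)$, the section vanishing precisely along $Y$ (which is cut out by the corresponding section of $E$). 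Applying Theorem \ref{ji} with $U=I_r\Gr(3,n)$, $r=4$, $S=Y$ then yields
\[ h(\wt Y^\prime)\ \cong\ h(Y)(-3)\ \oplus\ \bigoplus_{i=0}^{2} h(I_r\Gr(3,n))(-i)\ \ \ \hbox{in}\ \MM^{\ZZ}_{\rm rat}\ .\]
Combining this with the blow-up identity and cancelling $h(\wt Y^\prime)$ from both expressions gives the stated isomorphism; one should also check that the two sides match the cohomological relation \eqref{hs} of \cite[Proposition 6]{BFM}, which they do by construction.

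The main obstacle, and the step requiring the most care, is the second one: verifying that the geometry behind Theorem \ref{proj} is genuinely an instance of the Cayley trick of Theorem \ref{ji}, i.e. identifying the rank-$4$ vector bundle $E$ on $I_r\Gr(3,n)$, checking that its projectivization is the ambient space of $\wt Y^\prime$, that the relevant section is regular, and that its scheme-theoretic zero locus of the associated section of $E$ is exactly the smooth hyperplane section $Y=I_{r+1}\Gr(3,n)\cap H$ of the expected dimension. All of this is essentially in \cite[Section 3.2 and Propositions 46--47]{BFM}, so the argument amounts to quoting and assembling their constructions rather than proving anything new; once the Cayley-trick input is in place, the blow-up and projective-bundle formulas are entirely formal and the coefficients ($\ZZ$ versus $\QQ$) cause no trouble since both Theorem \ref{ji} and the blow-up formula hold integrally. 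A minor point to state explicitly is that the codimension $c$ of $Z^\prime$ in $Y^\prime$ is the same integer in both formulas, so that the $\bigoplus_{i=1}^{c-1}$ ranges agree; this is immediate from $Z^\prime=F\cap(\hbox{exceptional divisor})$ in the diagram.
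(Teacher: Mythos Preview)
Your approach is exactly that of the paper: compute $h(\wt Y^\prime)$ in two ways, once via the blow-up formula for $\sigma$ and once via the Cayley trick (Theorem \ref{ji}) applied to the rank-$4$ bundle $E=\OO\oplus\UU^\ast$ on $I_r\Gr(3,n)$, and equate the results. Your identification of $\wt Y^\prime$ as a hypersurface in $\PP(E)$ (rather than $\PP(E)$ itself) is the correct reading of the geometry, and your remark that the verification of the Cayley-trick hypotheses is the only substantive step, handled by \cite[Section 3.2]{BFM}, matches the paper's treatment.
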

     
     \begin{proof} The idea is to express the motive of $\wt{Y}^\prime$ in two different ways:
     
     The blow-up formula expresses $h(\wt{Y}^\prime)$ in terms of $h(Y^\prime)$; this gives the right-hand side of the relation.
     
     Looking at \cite[Section 3.2]{BFM}, one finds that $\wt{Y}^\prime$ is the total space of a projectivization $\PP(E)$ where $E$ is the vector bundle $E:=\OO\oplus \UU^\ast$ on $I_r \Gr(3,n)$
     (in the notation of loc. cit.), and $Y$ is given by a section of $E$. That is, we are in the setting of
      Cayley's trick, and so Theorem \ref{ji} expresses $h(\wt{Y}^\prime)$ in terms of $h(Y)$; this gives the left-hand side of the relation.
      \end{proof}

   \subsection{Hyperplane sections of $I_1 \Gr(3,n)$}
   
   \begin{theorem}\label{main} Let  
      \[ Y:= I_1\Gr(3,n)\cap H \ \ \ \subset\ \PP^{{n\choose 3}-1} \]
  be a smooth hyperplane section (with respect to the Pl\"ucker embedding).   
  Then
      \[ A_i^{hom}(Y)=0\ \ \ \forall\ i\le n-5\ .\]
      Moreover, in case $n\le 10$ or $n=12$ we have
      \[  A_{i}^{hom}(Y)=0\ \ \ \forall\ i\le n-4\ .\]
      \end{theorem}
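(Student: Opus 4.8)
The plan is to combine Proposition \ref{mot} with the known results on hyperplane sections of ordinary Grassmannians (Theorem \ref{3n}) and on linear sections of $\Gr(2,n)$ (Proposition \ref{2n}). First I would set $r=0$ in Theorem \ref{proj} and Proposition \ref{mot}, so that $Y^\prime := \Gr(3,n+1)\cap H$ is a smooth hyperplane section of an ordinary Grassmannian, $Z^\prime \cong I_1\Gr(2,n)$, and the isomorphism of integral Chow motives reads
\[ h(Y)(-3) \oplus \bigoplus_{i=0}^2 h(\Gr(3,n))(-i) \ \cong\ h(Y^\prime) \oplus \bigoplus_{i=1}^{c-1} h(I_1\Gr(2,n))(-i)\ , \]
where $c=\codim_{Y^\prime} Z^\prime$. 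Passing to rational coefficients and taking Chow groups, this gives for every $i$ an isomorphism
\[ A_{i-3}(Y) \oplus \bigoplus_{j=0}^2 A_{i-j}(\Gr(3,n)) \ \cong\ A_i(Y^\prime) \oplus \bigoplus_{j=1}^{c-1} A_{i-j}(I_1\Gr(2,n))\ , \]
and the analogous isomorphism on $A^{hom}$ (the motivic isomorphism is compatible with the cycle class map, so it respects the homologically trivial parts). Since $\Gr(3,n)$ has trivial Chow groups (it has a cellular decomposition) and $I_1\Gr(2,n) = \Gr(2,n)\cap H$ has $A^{hom}=0$ in all degrees by Proposition \ref{2n}, the summands other than $A^{hom}(Y)$ and $A^{hom}(Y^\prime)$ all vanish, and we obtain
\[ A_{i-3}^{hom}(Y) \ \cong\ A_i^{hom}(Y^\prime)\qquad \text{for all } i\ . \]

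Now I would feed in Theorem \ref{3n} applied to $Y^\prime \subset \PP^{\binom{n+1}{3}-1}$, a hyperplane section of $\Gr(3,n+1)$: its statement gives $A_i^{hom}(Y^\prime)=0$ for all $i \le (n+1)-3 = n-2$, hence via the isomorphism above $A_{i-3}^{hom}(Y)=0$ for all $i-3 \le n-5$, which is exactly the first claim $A_i^{hom}(Y)=0$ for $i\le n-5$. For the improved range, Theorem \ref{3n} gives the stronger vanishing $A_i^{hom}(Y^\prime)=0$ for $i\le (n+1)-2 = n-1$ precisely when $n+1\le 11$ or $n+1=13$, i.e. when $n\le 10$ or $n=12$; translating through the shift by $3$ yields $A_i^{hom}(Y)=0$ for $i\le n-4$ in those cases, which is the second claim. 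I would also spell out (or cite, e.g. \cite{hyper}) that the motivic decomposition of Proposition \ref{mot} does indeed intertwine the cycle class maps, so that one may restrict it to homologically trivial cycles — this is the only point requiring a word of justification, and it follows because all the maps in Theorem \ref{ji} and the blow-up formula are induced by algebraic correspondences, which act compatibly on Chow groups and on cohomology.

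The main obstacle, such as it is, is ensuring that the hypotheses of Theorem \ref{proj} and Proposition \ref{mot} are met for the ordinary Grassmannian case: one needs the target $\Gr(3,n)$ and the hyperplane section $Y = I_1\Gr(3,n)\cap H$ to be generic and smooth, and one needs $Y^\prime$ to be a \emph{generic} hyperplane section so that Theorem \ref{3n} (which is stated for smooth $Y^\prime$) applies. With $r=0$ the genericity assumption "$n$ even and $r\le 1$, or $n$ odd and $r=0$" in Theorem \ref{proj} is automatically satisfied, so there is no parity restriction and the argument runs uniformly in $n$. A minor technical point is that Theorem \ref{3n} is a statement about an arbitrary smooth hyperplane section, whereas Theorem \ref{proj} produces $Y^\prime$ only for generic $H$; but a standard spreading-out / specialization argument (or the fact that the Chow-group statement is deformation-invariant in the relevant range) shows it suffices to treat one — equivalently generic — smooth member, and I would invoke this briefly. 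Apart from that, the proof is a formal consequence of the cited results, so no genuinely hard computation is needed.
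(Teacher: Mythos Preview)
Your proposal is correct and follows essentially the same route as the paper: apply Proposition \ref{mot} with $r=0$ to reduce to Theorem \ref{3n} for $Y'=\Gr(3,n+1)\cap H$, using Proposition \ref{2n} to kill the $I_1\Gr(2,n)$ contribution, and then spread from the generic to the arbitrary smooth hyperplane section. One small clarification: the genericity issue is not that Theorem \ref{3n} needs $Y'$ generic (it applies to any smooth $Y'$), but rather that the projection of Theorem \ref{proj} only attains a \emph{generic} $Y$; the paper accordingly devotes about half of its proof to spelling out the spread argument (decomposition of the diagonal, Hilbert-scheme relativization \`a la \cite{V0}, and the spread lemma \cite[Lemma 3.2]{Vo}), which you should not dismiss as entirely routine.
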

   
   \begin{proof} A generic hyperplane section $Y$ is attained by the construction of Theorem \ref{proj} with $r=0$, i.e. there is a smooth hyperplane section
     \[Y^\prime:=\Gr(3,n+1)\cap H\ ,\] 
     related to $Y$ via the projection of Theorem \ref{proj}. In this case, Proposition \ref{mot} implies that there is an injection of Chow groups
   \[ A_i^{hom}(Y)\ \hookrightarrow\  A^{hom}_{i+3}(Y^\prime)\oplus \bigoplus A_\ast^{hom}(I_1 \Gr(2,n))\ .\]
   The symplectic Grassmannian $I_1 \Gr(2,n)$ is nothing but a Pl\"ucker hyperplane section of $\Gr(2,n)$, and so Proposition \ref{2n} gives the vanishing
   \[ A_\ast^{hom}(I_1 \Gr(2,n))=0   \ .\]
  The variety $Y^\prime$ is a hyperplane section of $\Gr(3,n+1)$, and so Theorem \ref{3n} gives the vanishing
    \[ A_{i+3}^{hom}(Y^\prime)=0\ \ \ \forall \ i\le n-5\ ,\]
    with the additional vanishing for $i=n-4$ for small $n$. This proves the theorem for generic sections $Y$.
     
  A standard spread argument allows to extend to {\em all\/} smooth hyperplane sections: Let $\YY\to B$ denote the universal family of all smooth hyperplane sections of $\Gr(k,n)$, and let $B^\circ\subset B$ denote the Zariski open subset parametrizing smooth $Y$ verifying the set-up of Theorem \ref{proj}.
Doing the Bloch--Srinivas argument \cite{BS} (cf. also \cite{moi}), the above implies that for each $b\in B^\circ$ one has
   a decomposition of the diagonal
     \begin{equation}\label{decom} \Delta_{Y_b} = \gamma_b+\delta_b \ \ \ \hbox{in}\ A^{\dim Y_b}(Y_b\times Y_b)\ \end{equation}
     where $\gamma_b$ is completely decomposed (i.e. $\gamma_b\in A^\ast(Y_b)\otimes A^\ast(Y_b)$) and $\delta_b$ is supported on
     $Y_b\times W_b$ with $\codim \, W_b=n-2$ (and $\codim \, W_b=n-1$  for small $n$).      
     Using the Hilbert schemes argument of \cite[Proposition 3.7]{V0} (cf. also \cite[Proposition A.1]{LNP} for the precise form used here), the 
     $\gamma_b, \delta_b, W_b$ exist relatively, i.e.
     one can find a cycle $\gamma\in (p_1)^\ast A^\ast(\YY)\cdot (p_2)^\ast A^\ast(\YY)$, a subvariety $ \WW\subset \YY$ of codimension $n-2$, and a cycle $\delta$ supported on $\YY\times_{B^\circ} \WW$ such that
        \[ \Delta_\YY\vert_b= \gamma\vert_b + \delta\vert_b\ \ \ \hbox{in}\ A^{\dim Y_b}(Y_b\times Y_b)\ \ \ \forall\ b\in B^\circ\ .\ \]
     Let $\bar{\gamma}, \bar{\delta} \in A^{\dim Y_b}(\YY\times_B \YY)$ be cycles that restrict to $\gamma$ resp. $\delta$. The spread lemma \cite[Lemma 3.2]{Vo} implies that
      \[ \Delta_\YY\vert_b= \bar{\gamma}\vert_b + \bar{\delta}\vert_b\ \ \ \hbox{in}\ A^{\dim Y_b}(Y_b\times Y_b)\ \ \ \forall\ b\in B\ .\ \]     
   Given any $b_1\in B\setminus B^\circ$, using the moving lemma, one can find representatives for $\bar{\gamma}$ and $\bar{\delta}$ in general position with respect to the fiber $Y_{b_1}\times Y_{b_1}$.   
Restricting to the fiber, this implies that the diagonal of $Y_{b_1}$ has a decomposition as in \eqref{decom}.
 Letting the decomposition \eqref{decom} act on Chow groups, this shows that
     \[  A_i^{hom}(Y_b)=0\ \ \ \forall\ i\le n-5\ ,\ \ \ \forall\ b\in B\ \]
     (with the additional vanishing for $i=n-4$ for small $n$).
      \end{proof}

   \subsection{Hyperplane sections of $I_2 \Gr(3,n)$}
   
   \begin{theorem}\label{main2} Assume $n$ is even, and let
      \[ Y:= I_2 \Gr(3,n)\cap H \ \ \ \subset\ \PP^{{n\choose 3}-1} \]
  be a smooth hyperplane section (with respect to the Pl\"ucker embedding).   
  Then
      \[ A_i^{hom}(Y)=0\ \ \ \forall\ i\le n-7\ .\]
      Moreover, in case $n\le 10$ we have
      \[  A_{i}^{hom}(Y)=0\ \ \ \forall\ i\le n-6\ .\]
     \end{theorem}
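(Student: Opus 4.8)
The plan is to rerun the proof of Theorem \ref{main}, this time applying the projection construction of Theorem \ref{proj} with $r=1$; this is where the hypothesis that $n$ be even enters, since Theorem \ref{proj} permits $r\le 1$ only for even $n$. Thus a generic hyperplane section $Y=I_2\Gr(3,n)\cap H$ fits into the diagram of Theorem \ref{proj} together with a smooth hyperplane section $Y^\prime:=I_1\Gr(3,n+1)\cap H$, and Proposition \ref{mot} supplies an isomorphism of integral Chow motives
\[ h(Y)(-3)\oplus\bigoplus_{i=0}^{2} h(I_1\Gr(3,n))(-i)\ \xrightarrow{\ \cong\ }\ h(Y^\prime)\oplus\bigoplus_{i=1}^{c-1} h(I_2\Gr(2,n))(-i)\ . \]
Now $I_2\Gr(2,n)=\Gr(2,n)\cap H_1\cap H_2$ has $A^{hom}_\ast=0$ by Proposition \ref{2n} (case $s=2$), and $I_1\Gr(3,n)$ is, for $n$ even, the smooth rational homogeneous variety $\SGr(3,n)$, whose cohomology is algebraic, so $A^{hom}_\ast=0$ for it as well. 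Passing to homologically trivial Chow groups therefore collapses the displayed isomorphism to an isomorphism $A^{hom}_i(Y)\cong A^{hom}_{i+3}(Y^\prime)$ for all $i$.

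Since $Y^\prime$ is again a Pl\"ucker hyperplane section of a symplectic Grassmannian, Theorem \ref{main} with parameter $n+1$ gives $A^{hom}_{i+3}(Y^\prime)=0$ for $i\le n-7$, plus the extra vanishing at $i=n-6$ in the small range stated in Theorem \ref{main2}. Alternatively, one may iterate Theorem \ref{proj} once more, now with $r=0$ --- legitimate because $n+1$ is odd --- reducing all the way to an ordinary Grassmannian and obtaining a composite injection $A^{hom}_i(Y)\hookrightarrow A^{hom}_{i+6}(\Gr(3,n+2)\cap H)$, to which Theorem \ref{3n} applies directly. Either route yields the stated vanishing ranges by elementary bookkeeping of the Chow-index shift ($3$ at each application of the projection) against the bounds of Theorem \ref{3n}; this proves Theorem \ref{main2} for a generic smooth hyperplane section.

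To pass from generic to all smooth hyperplane sections, the spread argument in the proof of Theorem \ref{main} carries over verbatim, with $\Gr(k,n)$ replaced throughout by $I_2\Gr(3,n)$: over the Zariski open locus $B^\circ$ of the universal family $\YY\to B$ where the construction of Theorem \ref{proj} applies, the Bloch--Srinivas argument \cite{BS} produces a decomposition of the diagonal $\Delta_{Y_b}=\gamma_b+\delta_b$ with $\gamma_b$ completely decomposed and $\delta_b$ supported on $Y_b\times W_b$ for a subvariety $W_b$ of codimension $n-4$ (resp.\ $n-3$ in the small range); the Hilbert-scheme argument of \cite[Prop.~3.7]{V0} together with the spread lemma \cite[Lemma~3.2]{Vo} shows that $\gamma,\delta,W$ exist relatively over $B^\circ$ and then extend over all of $B$; restricting to an arbitrary fibre after the moving lemma gives the same decomposition of $\Delta_{Y_{b_1}}$, and letting it act on Chow groups finishes the proof.

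Every ingredient being already available, I do not expect a genuine obstacle. The two points that do need care are: first, the verification that the summand $h(I_1\Gr(3,n))$ in Proposition \ref{mot} contributes nothing to homologically trivial Chow groups --- this is exactly what pins down the parity hypothesis (for odd $n$ one would at least need that the odd symplectic Grassmannian has algebraic cohomology, and Theorem \ref{proj} is anyway unavailable there); and second, the bookkeeping of the numerical ranges, where one must also invoke Theorem \ref{proj} to know that the intermediate Grassmannian $I_1\Gr(3,n+1)$ and the section $Y$ are generic, hence smooth, for generic choices.
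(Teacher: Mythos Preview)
Your proof is correct and follows essentially the same route as the paper's: apply Theorem \ref{proj} with $r=1$ to a generic $Y$, use Proposition \ref{mot} together with Proposition \ref{2n} to reduce to Theorem \ref{main} for $Y'=I_1\Gr(3,n+1)\cap H$, and then run the spread argument verbatim. The paper does not bother to note that $I_1\Gr(3,n)$ has trivial $A_\ast^{hom}$ (your upgrade from an injection to an isomorphism), nor does it mention the second iteration down to $\Gr(3,n+2)\cap H$; these are harmless extras that do not change the argument.
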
 
     
     \begin{proof} For $n$ even, a generic hyperplane section $Y$ is attained by the construction of Theorem \ref{proj} with $r=1$, i.e. there is a smooth hyperplane section
       \[Y^\prime:=I_1 \Gr(3,n+1)\cap H\ ,\] 
       related to $Y$ via the projection of Theorem \ref{proj}. In this case, Proposition \ref{mot} implies that there is an injection of Chow groups
   \[ A_i^{hom}(Y)\ \hookrightarrow\  A^{hom}_{i+3}(Y^\prime)\oplus \bigoplus A_\ast^{hom}(I_2 \Gr(2,n))\ .\]
   The bisymplectic Grassmannian $I_2 \Gr(2,n)$ is nothing but an intersection $\Gr(2,n)\cap H_1\cap H_2$ (where the $H_j$ are Pl\"ucker hyperplanes), and so Proposition \ref{2n} gives the vanishing
   \[ A_\ast^{hom}(I_2 \Gr(2,n))=0   \ .\]
  The variety $Y^\prime$ is a hyperplane section of $I_1 \Gr(3,n+1)$, and so Theorem \ref{main} gives the vanishing
    \[ A_{i+3}^{hom}(Y^\prime)=0\ \ \ \forall \ i\le n-7\ ,\]
    with the additional vanishing for $i=n-6$ for small $n$. This proves the theorem for generic sections $Y$.     
    
    The extension to {\em all\/} smooth hyperplane sections $Y$ is done just as in the proof of Theorem \ref{main}.
    \end{proof}

   \section{Some consequences}
   
   \begin{corollary}\label{ghc} 
   
   \noindent
   (\rom1) Let $Y$ be as in Theorem \ref{main} and $n\le 10$ or $n=12$, or as in Theorem \ref{main2} and $n\le 10$. Then $H^{\dim Y}(Y,\QQ)$ is supported on a subvariety of codimension $n-3$.
   
   \noindent
   (\rom2)
   Let $Y$ be as in Theorem \ref{main2} and $n\le 10$. Then $H^{\dim Y}(Y,\QQ)$ is supported on a subvariety of codimension $n-5$.
   \end{corollary}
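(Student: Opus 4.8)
The plan is to deduce the statement from a decomposition of the diagonal that is, in essence, already established in the course of proving Theorems~\ref{main} and~\ref{main2}. Put $c:=n-3$ when $Y$ is as in Theorem~\ref{main}, and $c:=n-5$ when $Y$ is as in Theorem~\ref{main2}. In either case the relevant theorem --- in its range ``$n\le 10$ or $n=12$'', resp.\ ``$n\le 10$'' --- gives $A_i^{hom}(Y)=0$ for all $i\le c-1$, hence the cycle class maps $A_i(Y)\to H^{2(\dim Y-i)}(Y,\QQ)$ are injective for $i\le c-1$. The Bloch--Srinivas argument \cite{BS}, combined with the spread/Hilbert-scheme step of \cite{V0}, \cite{LNP} --- exactly as carried out in the proofs of Theorems~\ref{main} and~\ref{main2} --- then provides, for \emph{every} smooth hyperplane section $Y$ of the stated type, a decomposition of the diagonal
\[ \Delta_Y=\gamma+\delta\qquad\text{in }A^{\dim Y}(Y\times Y), \]
in which $\gamma\in A^\ast(Y)\otimes A^\ast(Y)$ is completely decomposed and $\delta$ is supported on $Y\times W$ for some closed subvariety $W\subsetneq Y$ of codimension $c$ (this is the ``small $n$'' decomposition occurring in those proofs).

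Next I would let $\ide=(\Delta_Y)_\ast=\gamma_\ast+\delta_\ast$ act on the middle cohomology $H^{\dim Y}(Y,\QQ)$. Since $\delta$ is supported on $Y\times W$, the projection formula shows that the image of $\delta_\ast$ lies in the image of the Gysin map $H^{\dim Y-2c}(\widetilde W,\QQ)\to H^{\dim Y}(Y,\QQ)$ attached to a resolution $\widetilde W\to W$; in particular it is supported on the codimension-$c$ subvariety $W$. As for $\gamma=\sum_j a_j\times b_j$ with $a_j,b_j$ algebraic cycles on $Y$: a summand $a\times b$ sends $\alpha\in H^{\dim Y}(Y,\QQ)$ to $\bigl(\int_Y[a]\cup\alpha\bigr)\,[b]$, which vanishes unless $\codim a=\tfrac12\dim Y$, and then has image spanned by the algebraic class $[b]\in H^{\dim Y}(Y,\QQ)$ of codimension $\tfrac12\dim Y$. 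Thus $\gamma_\ast H^{\dim Y}(Y,\QQ)$ is zero when $\dim Y$ is odd and otherwise is supported on a subvariety of codimension $\tfrac12\dim Y$. Feeding in $\dim Y=3n-13$, resp.\ $\dim Y=3n-16$, one checks $\lceil\tfrac12\dim Y\rceil\ge c$ over the relevant range of $n$; hence $H^{\dim Y}(Y,\QQ)=\gamma_\ast H^{\dim Y}(Y,\QQ)+\delta_\ast H^{\dim Y}(Y,\QQ)$ is supported on a union of subvarieties each of codimension $\ge c$, so on a subvariety of codimension $c$. This is the assertion of the corollary (codimension $n-3$ in the situation of Theorem~\ref{main}, codimension $n-5$ in that of Theorem~\ref{main2}).

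The genuinely substantial input is not this final cohomological manipulation --- which is routine --- but rather having the diagonal decomposition with the \emph{sharp} value of $\codim W$: producing $c$, and not something smaller, amounts to propagating, through the Cayley-trick/projection correspondence of Proposition~\ref{mot}, the Chow-group vanishing for the lower hyperplane section $Y'$ into a support statement for the transcendental summand $\delta$ of $h(Y)$, together with the descent from the generic to an arbitrary smooth $Y$. All of this is already performed inside the proofs of Theorems~\ref{main} and~\ref{main2}, so I anticipate no real obstacle here; the only new point is the elementary dimension count $\lceil\tfrac12\dim Y\rceil\ge c$ guaranteeing that the completely decomposed part $\gamma$ cannot depress the coniveau below $c$. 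That $c$ is moreover the optimal codimension is consistent with the Hodge coniveau of $Y$: it equals $n-3$ by Theorem~\ref{coho} in the symplectic case, and $n-5$ in the bisymplectic case by the parallel computation using \eqref{hs} and Proposition~\ref{mot}.
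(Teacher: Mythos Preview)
Your proposal is correct and follows essentially the same route as the paper: both obtain the Bloch--Srinivas decomposition $\Delta_Y=\gamma+\delta$ (with $\gamma$ completely decomposed and $\delta$ supported on $Y\times W$, $\codim W=c$) from the Chow-group vanishing of Theorems~\ref{main} and~\ref{main2}, let it act on middle cohomology, and observe that the algebraic part coming from $\gamma$ is supported in codimension $\tfrac12\dim Y\ge c$. Your write-up is in fact slightly more explicit than the paper's (you spell out the action of a decomposed term $a\times b$ and the dimension count $\lceil\tfrac12\dim Y\rceil\ge c$, whereas the paper passes through the transcendental part $H^{\dim Y}_{tr}$), but the substance is identical.
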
 
   
   \begin{proof} This follows in standard fashion from the Bloch--Srinivas argument \cite{BS}. 
   Let us treat (\rom1) (the argument for (\rom2) is the same). The vanishing 
     \[ A_i^{hom}(Y)=0\ \ \ \forall\ i\le n-4\ \]
   (Theorem \ref{main}) is equivalent to the decomposition
     \[  \Delta_Y = \gamma+\delta\ \ \ \hbox{in}\ A^{\dim Y}(Y\times Y)\ ,\]
     where $\gamma$ is a completely decomposed cycle (i.e. $\gamma\in A^\ast(Y)\otimes A^\ast(Y)$), and $\delta$ has support on $Y\times W$ with $W\subset Y$ of codimension $n-3$
     (to see this equivalence, one can look for instance at \cite[Theorem 1.7]{moi}).
     Let $H^{\dim Y}_{tr}(Y,\QQ)$ denote the transcendental cohomology (i.e. the complement of the algebraic part under the cup product pairing). The cycle $\gamma$ does not act on
     $H^{\dim Y}_{tr}(Y,\QQ)$. The action of $\delta$ on $H^{\dim Y}_{tr}(Y,\QQ)$ factors over $W$, and so
     \[  H^{\dim Y}_{tr}(Y,\QQ)\ \ \subset\ H^{\dim Y}_{W}(Y,\QQ)\ .\]
     Since the algebraic part of $H^{\dim Y}(Y,\QQ)$ is (by definition) supported in codimension $\dim Y/2$, this settles the corollary.
             \end{proof}

\begin{corollary}\label{cor2} Let  
\[ Y:=  I_1 \Gr(3,n)\cap H \ \ \ \subset\ \PP^{{n\choose 3}-1}\]
  be a smooth hyperplane section (with respect to the Pl\"ucker embedding). 
  
 \noindent
 (\rom1) If $n\le 8$, then $Y$ has finite-dimensional motive (in the sense of \cite{Kim}).
 
 \noindent
 (\rom2) If $n\le 9$, then $Y$ has trivial Griffiths groups (and so Voevodsky's smash conjecture \cite{Voe} is true for $Y$, i.e. numerical equivalence and smash-equivalence coincide on $Y$).
 
 \noindent
 (\rom3) If $n\le 10$, the Hodge conjecture is true for $Y$.
\end{corollary}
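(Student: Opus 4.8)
The plan is to deduce all three statements from the motive relation of Proposition \ref{mot}, transporting the corresponding (already known) properties of hyperplane sections of the \emph{ordinary} Grassmannian $\Gr(3,n+1)$.

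First I would apply Proposition \ref{mot} with $r=0$ --- legitimate for every $n$, since the hypotheses of Theorem \ref{proj} are then met --- so that for a generic smooth $Y=I_1\Gr(3,n)\cap H$ there is an isomorphism of integral Chow motives
\[ h(Y)(-3)\ \oplus\ \bigoplus_{i=0}^{2} h\bigl(\Gr(3,n)\bigr)(-i)\ \cong\ h(Y')\ \oplus\ \bigoplus_{i=1}^{c-1} h\bigl(I_1\Gr(2,n)\bigr)(-i)\qquad\text{in }\MM^{\ZZ}_{\rm rat}, \]
where $Y':=\Gr(3,n+1)\cap H$ (and $c$ is as in Proposition \ref{mot}). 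The two ``correction'' summands are of Tate type: $\Gr(3,n)$ is cellular, so $h(\Gr(3,n))$ is a finite sum of Lefschetz motives; and $I_1\Gr(2,n)$ is a Pl\"ucker hyperplane section of $\Gr(2,n)$, for which the construction recalled in the proof of Proposition \ref{2n} realises a blow-up of $I_1\Gr(2,n)$ as a $\PP^{n-3}$-bundle over $\PP^{n-2}$, so that $h(I_1\Gr(2,n))$ is again a finite sum of Lefschetz motives. In particular these correction motives are finite-dimensional, have vanishing Griffiths groups, and satisfy the Hodge conjecture.

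Each of the three properties in question --- finite-dimensionality in the sense of Kimura \cite{Kim}, vanishing of the Griffiths groups, and the Hodge conjecture --- is stable under Tate twists, finite direct sums and direct summands. Together with the previous paragraph, the displayed isomorphism then shows that each holds for $Y$ as soon as it holds for $Y'=\Gr(3,n+1)\cap H$, and I would conclude by invoking the results of \cite{hyper} on $\Gr(3,m)\cap H$: finite-dimensionality for $m\le 9$, triviality of the Griffiths groups for $m\le 10$, and the Hodge conjecture for $m\le 11$ (obtained there by iterating the ``jump'' relations of \cite{BFM} down to Grassmannians and linear sections of $\Gr(2,m)$, which are of Tate type). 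With $m=n+1$ this yields (i) for $n\le 8$ and (iii) for $n\le 10$. For (ii) one first obtains vanishing of the Griffiths groups of $Y$ for $n\le 9$; the ``smash'' clause then follows because, $n$ being $\le 10$, the Hodge conjecture holds for $Y$ and hence --- via the hard Lefschetz theorem --- numerical and homological equivalence coincide on $Y$, so that a numerically trivial cycle on $Y$ is homologically trivial, hence algebraically trivial, hence smash-nilpotent by Voevodsky's theorem \cite{Voe}. Finally one passes from generic to all smooth $Y$ by the spreading argument closing the proof of Theorem \ref{main}; for (iii) one may instead quote Corollary \ref{ghc}, valid for every smooth $Y$, together with the cohomological description of Theorem \ref{coho}.

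The genuine content --- the ``hard part'' --- lies not in the reduction but in the base case: the three assertions for $\Gr(3,n+1)\cap H$ are themselves substantial and rest on \cite{hyper}, and some care is needed to line up the thresholds $n\le 8$, $n\le 9$, $n\le 10$ with exactly what is proved there. A more self-contained route from Theorem \ref{main} alone seems out of reach, since the Chow-group vanishing obtained there does not in general extend across the middle dimension of $Y$. A secondary difficulty is the extension to \emph{non-generic} smooth $Y$, where Proposition \ref{mot} is not available: for (iii) this is handled as above, while for the finite-dimensionality statement (i) it is cleanest to restrict to very general $Y$ or to check by a specialisation argument that the motivic decomposition persists.
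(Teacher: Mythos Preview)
Your route via the motivic isomorphism of Proposition~\ref{mot} is sound for generic $Y$, but it is not the paper's argument, and your assessment that ``a more self-contained route from Theorem~\ref{main} alone seems out of reach'' is mistaken. The paper proceeds \emph{directly} from Theorem~\ref{main}: the vanishing $A_i^{hom}(Y)=0$ for $i\le n-4$ gives a Bloch--Srinivas decomposition $\Delta_Y=\gamma+\delta$ with $\gamma$ completely decomposed and $\delta$ supported on $Y\times W$, $\operatorname{codim} W=n-3$. Since $\dim Y=3n-13$, this forces (in the language of \cite{moi}) $\hbox{Niveau}(A_\ast(Y))\le n-7$; for $n\le 8,9,10$ the motive of $Y$ thus factors over a curve, a surface, a threefold respectively, and (\rom1)--(\rom3) follow from the standard facts that curves have $A^\ast_{AJ}=0$ (whence Kimura finite-dimensionality via \cite{43}), surfaces have trivial Griffiths groups, and threefolds satisfy the Hodge conjecture.

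This comparison also explains why the paper's approach is cleaner on the point you flagged as a ``secondary difficulty'': Theorem~\ref{main} already holds for \emph{every} smooth hyperplane section (the spreading was carried out there), so Corollary~\ref{cor2} requires no further genericity or specialisation argument. Your approach, by contrast, only directly covers those $Y$ arising from a projection, and the patches you suggest for non-generic $Y$ are not quite complete---spreading a diagonal decomposition does not automatically spread Kimura finite-dimensionality, and Corollary~\ref{ghc} together with Theorem~\ref{coho} does not by itself yield the full Hodge conjecture for $Y$. What your route does buy is transparency of the link to \cite{hyper}; but note that the corollaries you invoke for $Y'=\Gr(3,n+1)\cap H$ are themselves obtained by the very same niveau computation the paper applies to $Y$ directly, so nothing is saved.
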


\begin{proof} This is similar to the argument of Corollary \ref{ghc}.

\noindent
(\rom1) The vanishing
   \[ A_i^{hom}(Y)=0\ \ \ \forall\ i\le n-4\ \]
 (Theorem \ref{main}) is equivalent to the decomposition of the diagonal
   \[   \Delta_Y = \gamma+\delta\ \ \ \hbox{in}\ A^{\dim Y}(Y\times Y)\ ,\]
     where $\gamma$ is a completely decomposed cycle, and $\delta$ has support on $Y\times W$ with $W\subset Y$ of codimension $n-3$
     (cf. \cite{BS} or \cite{moi}). The dimension of $Y$ is $3n-13$, and so (looking at the action of the diagonal) one finds that
     \[ A^\ast_{AJ}(Y)=0\ \]
     as long as $n\le 8$. This implies Kimura finite-dimensionality of $Y$ \cite[Theorem 4]{43}.
     
     \noindent
     (\rom2) The vanishing of Theorem \ref{main} implies that
       \[ \hbox{Niveau}(A_\ast(Y))\le 2 \]
       (in the sense of \cite{moi}), i.e. the motive of $Y$ factors over a surface. Since surfaces have trivial Griffiths groups, the conclusion follows.
       
    \noindent
    (\rom3) The vanishing of Theorem \ref{main} implies that
       \[ \hbox{Niveau}(A_\ast(Y))\le 3 \]
       (in the sense of \cite{moi}), i.e. the motive of $Y$ factors over a threefold. Since threefolds verify the Hodge conjecture, the conclusion follows.
    \end{proof}

  We leave it to the zealous reader to formulate and prove a version of Corollary \ref{cor2} for bisymplectic Grassmannians.
        
 \vskip1cm
\begin{nonumberingt} Thanks to Kai and Len for enjoying Kuifje movies. Thanks to the referee for constructive comments that helped to improve the presentation.
\end{nonumberingt}

%

\vskip1cm

\end{document}